\newfont{\footsc}{cmcsc10 at 8truept}
\newfont{\footbf}{cmbx10 at 8truept}
\newfont{\footrm}{cmr10 at 10truept}
\def\ang#1{\left\langle#1\right\rangle}
\newtheorem{theorem}{Theorem}
\newtheorem{Lem}{Lemma}
\newcommand{\qed}{\hfill \rule{0.7ex}{1.5ex}}
\newenvironment{proof}{\begin{trivlist} \item[\hskip \labelsep{\it
Proof.}]\setlength{\parindent}{0pt}}{\end{trivlist}}
\title{Barnes-type Daehee polynomials}
\author{
Dae San Kim
\thanks{
This work was supported by the National Research Foundation of Korea (NRF) grant funded by the Korean government (MOE) (No.2012R1A1A2003786).
}\\
\small Department of Mathematics, Sogang University\\[-0.8ex]
\small Seoul 121-742, Republic of Korea\\[-0.8ex]
\small \texttt{dskim@sogang.ac.kr}\\\\
Taekyun Kim
\thanks{
The present Research has been conducted by the Research Grant of Kwangwoon University in 2014.
}\\
\small Department of Mathematics, Kwangwoon University\\[-0.8ex]
\small Seoul 139-701, Republic of Korea\\[-0.8ex]
\small \texttt{tkkim@kw.ac.kr}\\\\
Takao Komatsu
\thanks{
The third author was supported in part by the Grant-in-Aid for Scientific research (C) (No.22540005), the Japan Society for the Promotion of Science.
}\\
\small Graduate School of Science and Technology, Hirosaki University\\[-0.8ex]
\small Hirosaki 036-8561, Japan\\[-0.8ex]
\small \texttt{komatsu@cc.hirosaki-u.ac.jp}\\\\
Jong-Jin Seo \\
\small Department of Applied Mathematics, Pukyong National University\\[-0.8ex]
\small Pusan 608-739, Republic of Korea\\[-0.8ex]
\small \texttt{seo2011@pknu.ac.kr}\\\\}
\date{
%\small Submitted: January 17, 2014;  Accepted: February 27, 2014.\\
\small MR Subject Classifications: 05A15, 05A40, 11B68, 11B75, 65Q05}
\begin{document}
\maketitle

\begin{abstract}
In this paper, we consider Barnes-type Daehee polynomials of the first kind and of the second kind.  From the properties of Sheffer sequences of these polynomials arising from umbral calculus, we derive new and interesting identities.
\end{abstract}

\section{Introduction}

In this paper, we consider the polynomials $D_n(x|a_1,\dots,a_r)$ and $\widehat D_n(x|a_1,\dots,a_r)$ called the Barnes-type Daehee polynomials of the first kind and of the second kind, whose generating functions are given by
\begin{align}
\prod_{j=1}^r\left(\frac{\ln(1+t)}{(1+t)^{a_j}-1}\right)(1+t)^x&=\sum_{n=0}^\infty D_n(x|a_1,\dots,a_r)\frac{t^n}{n!}\,,
\label{barnesdaehee1}\\
\prod_{j=1}^r\left(\frac{(1+t)^{a_j}\ln(1+t)}{(1+t)^{a_j}-1}\right)(1+t)^x&=\sum_{n=0}^\infty\widehat D_n(x|a_1,\dots,a_r)\frac{t^n}{n!}\,,
\label{barnesdaehee2}
\end{align}
respectively, where $a_1,\dots,a_r\ne 0$.
When $x=0$, $D_n(a_1,\dots,a_r)=D_n(0|a_1,\dots,a_r)$ and $\widehat D_n(a_1,\dots,a_r)=\widehat D_n(0|a_1,\dots,a_r)$ are called the Barnes-type Daehee numbers of the first kind and of the second kind, respectively.

Recall that the Daehee polynomials of the first kind and of the second kind of order $r$, denoted by $D_n^{(r)}(x)$ and $\widehat D_n^{(r)}(x)$, respectively, are given by the generating functions to be
\begin{align*}
\left(\frac{\ln(1+t)}{t}\right)^r(1+t)^x&=\sum_{n=0}^\infty D_n^{(r)}(x)\frac{t^n}{n!}\,,\\
\left(\frac{(1+t)\ln(1+t)}{t}\right)^r(1+t)^x&=\sum_{n=0}^\infty\widehat D_n^{(r)}(x)\frac{t^n}{n!}\,,
\end{align*}
respectively. If $a_1=\cdots=a_r=1$, then $D_n^{(r)}(x)=D_n(x|\underbrace{1,\dots,1}_r)$ and $\widehat D_n^{(r)}(x)=\widehat D_n(x|\underbrace{1,\dots,1}_r)$.
Dahee polynomials were defined by the second author \cite{TKim1} and have been investigated in \cite{KKR,OCS, JK}.

In this paper, we consider Barnes-type Daehee polynomials of the first kind and of the second kind.  From the properties of Sheffer sequences of these polynomials arising from umbral calculus, we derive new and interesting identities.

\section{Umbral calculus}

Let $\mathbb C$ be the complex number field and let $\mathcal F$ be the set of all formal power series in the variable $t$:
\begin{equation}
\mathcal F=\left\{f(t)=\sum_{k=0}^\infty\frac{a_k}{k!}t^k\Bigg|a_k\in\mathbb C\right\}\,.
\label{uc:fps}
\end{equation}

Let $\mathbb P=\mathbb C[x]$ and let $\mathbb P^\ast$ be the vector space of all linear functionals on $\mathbb P$.
$\ang{L|p(x)}$ is the action of the linear functional $L$ on the polynomial $p(x)$, and
we recall that the vector space operations on $\mathbb P^\ast$ are defined by $\ang{L+M|p(x)}=\ang{L|p(x)}+\ang{M|p(x)}$, $\ang{cL|p(x)}=c\ang{L|p(x)}$, where $c$ is a complex constant in $\mathbb C$. For $f(t)\in\mathcal F$, let us define the linear functional on $\mathbb P$ by setting
\begin{equation}
\ang{f(t)|x^n}=a_n,\quad (n\ge 0).
\label{uc9}
\end{equation}
In particular,
\begin{equation}
\ang{t^k|x^n}=n!\delta_{n,k}\quad (n, k\ge 0),
\label{uc10}
\end{equation}
where $\delta_{n,k}$ is the Kronecker's symbol.

For $f_L(t)=\sum_{k=0}^\infty\frac{\ang{L|x^k}}{k!}t^k$, we have $\ang{f_L(t)|x^n}=\ang{L|x^n}$. That is, $L=f_L(t)$.
The map $L\mapsto f_L(t)$ is a vector space isomorphism from $\mathbb P^\ast$ onto $\mathcal F$. Henceforth, $\mathcal F$ denotes both the algebra of formal power series in $t$ and the vector space of all linear functionals on $\mathbb P$, and so an element $f(t)$ of $\mathcal F$ will be thought of as both a formal power series and a linear functional. We call $\mathcal F$ the {\it umbral algebra} and the {\it umbral calculus} is the study of umbral algebra.
The order $O\bigl(f(t)\bigr)$ of a power series $f(t)(\ne 0)$ is the smallest integer $k$ for which the coefficient of $t^k$ does not vanish. If $O\bigl(f(t)\bigr)=1$, then $f(t)$ is called a {\it delta series}; if $O\bigl(f(t)\bigr)=0$, then $f(t)$ is called an {\it invertible series}.
For $f(t), g(t)\in\mathcal F$ with $O\bigl(f(t)\bigr)=1$ and $O\bigl(g(t)\bigr)=0$, there exists a unique sequence $s_n(x)$ ($\deg s_n(x)=n$) such that $\ang{g(t)f(t)^k|s_n(x)}=n!\delta_{n,k}$, for $n,k\ge 0$.
Such a sequence $s_n(x)$ is called the {\it Sheffer sequence} for $\bigl(g(t), f(t)\bigr)$ which is denoted by $s_n(x)\sim\bigl(g(t), f(t)\bigr)$.

For $f(t), g(t)\in\mathcal F$ and $p(x)\in\mathbb P$, we have
\begin{equation}
\ang{f(t)g(t)|p(x)}=\ang{f(t)|g(t)p(x)}=\ang{g(t)|f(t)p(x)}
\label{uc11}
\end{equation}
and
\begin{equation}
f(t)=\sum_{k=0}^\infty\ang{f(t)|x^k}\frac{t^k}{k!},\quad
p(x)=\sum_{k=0}^\infty\ang{t^k|p(x)}\frac{x^k}{k!}
\label{uc12}
\end{equation}
(\cite[Theorem 2.2.5]{Roman}). Thus, by (\ref{uc12}), we get
\begin{equation}
t^k p(x)=p^{(k)}(x)=\frac{d^k p(x)}{d x^k}\quad\hbox{and}\quad e^{yt}p(x)=p(x+y).
\label{uc13}
\end{equation}

Sheffer sequences are characterized in the generating function (\cite[Theorem 2.3.4]{Roman}).

\begin{Lem}
The sequence $s_n(x)$ is Sheffer for $\big(g(t),f(t)\bigr)$ if and only if
$$
\frac{1}{g\bigl(\bar f(t)\bigr)}e^{y\bar f(t)}=\sum_{k=0}^\infty\frac{s_k(y)}{k!}t^k\quad(y\in\mathbb C)\,,
$$
where $\bar f(t)$ is the compositional inverse of $f(t)$.
\label{th234}
\end{Lem}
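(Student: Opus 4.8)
The plan is to exploit the duality between the functionals $g(t)f(t)^k$ and the sequence $s_n(x)$ that is built into the definition of a Sheffer sequence, together with the Taylor-expansion identity $\ang{e^{yt}|p(x)}=p(y)$. I will prove both implications by passing between the two sides through the substitution $t\mapsto\bar f(t)$, which is legitimate because $\bar f(t)$ is a delta series and hence composition with it is a continuous ring homomorphism of $\mathcal F$ (with inverse given by $t\mapsto f(t)$). The one auxiliary fact I will use repeatedly is $\ang{e^{yt}|p(x)}=p(y)$: expanding $e^{yt}=\sum_j y^jt^j/j!$ and noting that $\ang{t^j|p(x)}=p^{(j)}(0)$ (since by (\ref{uc13}) $t^j$ acts as the $j$-th derivative, followed by taking the constant term, which is the value at $0$), Taylor's formula gives $\ang{e^{yt}|p(x)}=\sum_j\frac{y^j}{j!}p^{(j)}(0)=p(y)$; specializing $p=s_k$ yields $\ang{e^{yt}|s_k(x)}=s_k(y)$.

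For the forward direction, suppose $s_n(x)\sim\bigl(g(t),f(t)\bigr)$. Since $g(t)f(t)^k$ has order exactly $k$, the family $\{g(t)f(t)^k\}_{k\ge0}$ is a pseudobasis of $\mathcal F$, and the defining relation $\ang{g(t)f(t)^k|s_n(x)}=n!\delta_{n,k}$ exhibits $\{s_n(x)\}$ (a basis of $\mathbb P$ because $\deg s_n=n$) as its dual basis. Hence every $h(t)\in\mathcal F$ expands as $h(t)=\sum_k\frac{\ang{h(t)|s_k(x)}}{k!}g(t)f(t)^k$, which one verifies by pairing both sides against each $s_n(x)$ and using the defining relation. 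Taking $h(t)=e^{yt}$ and the computation above gives $e^{yt}=\sum_k\frac{s_k(y)}{k!}g(t)f(t)^k$. Finally I would substitute $t\mapsto\bar f(t)$; using $f\bigl(\bar f(t)\bigr)=t$ this becomes $e^{y\bar f(t)}=g\bigl(\bar f(t)\bigr)\sum_k\frac{s_k(y)}{k!}t^k$, and dividing by the invertible series $g\bigl(\bar f(t)\bigr)$ produces the claimed generating function.

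For the converse, assume the generating-function identity. Substituting $t\mapsto f(t)$ and using $\bar f\bigl(f(t)\bigr)=t$ turns it back into $e^{yt}=\sum_k\frac{s_k(y)}{k!}g(t)f(t)^k$. Pairing both sides with an arbitrary $p(x)$, using $\ang{e^{yt}|p(x)}=p(y)$ together with (\ref{uc11}), gives $p(y)=\sum_k\frac{\ang{g(t)f(t)^k|p(x)}}{k!}s_k(y)$ for all $y$, i.e. the polynomial identity $p(x)=\sum_k\frac{\ang{g(t)f(t)^k|p(x)}}{k!}s_k(x)$. To convert this into the Sheffer relation I need that $\{s_k(x)\}$ is a basis, which I would obtain by reading off from the generating function that $\deg s_k=k$: writing $\bar f(t)=b_1t+\cdots$ with $b_1\ne0$, the coefficient of $t^k$ in $e^{y\bar f(t)}/g\bigl(\bar f(t)\bigr)$ is a polynomial in $y$ of degree exactly $k$ (leading coefficient $b_1^k/(k!\,g(0))\ne0$), so that coefficient is $s_k(y)/k!$ with $\deg s_k=k$. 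With $\{s_k(x)\}$ a basis, choosing $p=s_m$ and comparing coefficients forces $\ang{g(t)f(t)^k|s_m(x)}=m!\delta_{m,k}$, which is precisely $s_n(x)\sim\bigl(g(t),f(t)\bigr)$.

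The main obstacle I anticipate is not any single computation but the bookkeeping that makes the two substitutions $t\mapsto\bar f(t)$ and $t\mapsto f(t)$ rigorous on infinite sums: one must check that composition with a delta series commutes with $\sum_k$ in the formal topology and preserves products (so that $f(\bar f(t))^k=t^k$), and that $g\bigl(\bar f(t)\bigr)$ is genuinely invertible of order $0$ so division is allowed. The duality expansion and the degree count are routine once the Section 2 framework is in place, so the real care goes into justifying these formal manipulations.
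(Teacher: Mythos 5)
The paper does not actually prove this lemma: it is quoted verbatim from Roman's \emph{The Umbral Calculus} (Theorem 2.3.4, cited as \cite{Roman}), so there is no internal proof to compare against. Your argument is correct and is essentially the standard textbook proof of that theorem: the evaluation identity $\ang{e^{yt}|p(x)}=p(y)$, the expansion of $e^{yt}$ in the pseudobasis $\{g(t)f(t)^k\}_{k\ge 0}$ dual to $\{s_n(x)\}$, and the substitution $t\mapsto\bar f(t)$ (reversed, together with the degree count $\deg s_k=k$ guaranteeing that $\{s_k(x)\}$ is a basis, for the converse), with the formal-topology justifications you flag being exactly the routine checks Roman's framework supplies.
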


For $s_n(x)\sim\bigl(g(t), f(t)\bigr)$, we have the following equations (\cite[Theorem 2.3.7, Theorem 2.3.5, Theorem 2.3.9]{Roman}):
\begin{align}
f(t)s_n(x)&=n s_{n-1}(x)\quad (n\ge 0),
\label{uc15}\\
s_n(x)&=\sum_{j=0}^n\frac{1}{j!}\ang{g\bigl(\bar f(t)\bigr)^{-1}\bar f(t)^j|x^n}x^j,
\label{uc16}\\
s_n(x+y)&=\sum_{j=0}^n\binom{n}{j}s_j(x)p_{n-j}(y)\,,
\label{uc17}
\end{align}
where $p_n(x)=g(t)s_n(x)$.

Assume that $p_n(x)\sim\bigl(1, f(t)\bigr)$ and $q_n(x)\sim\bigl(1, g(t)\bigr)$. Then the transfer formula (\cite[Corollary 3.8.2]{Roman}) is given by
$$
q_n(x)=x\left(\frac{f(t)}{g(t)}\right)^n x^{-1}p_n(x)\quad (n\ge 1).
$$
For $s_n(x)\sim\bigl(g(t), f(t)\bigr)$ and $r_n(x)\sim\bigl(h(t), l(t)\bigr)$, assume that
$$
s_n(x)=\sum_{m=0}^n C_{n,m}r_m(x)\quad (n\ge 0)\,.
$$
Then we have (\cite[p.132]{Roman})
\begin{equation}
C_{n,m}=\frac{1}{m!}\ang{\frac{h\bigl(\bar f(t)\bigr)}{g\bigl(\bar f(t)\bigr)}l\bigl(\bar f(t)\bigr)^m\Bigg|x^n}\,.
\label{uc19}
\end{equation}

\section{Main results}

We now note that $D_n(x|a_1,\dots,a_r)$ is the Sheffer sequence for
$$
g(t)=\prod_{j=1}^r\left(\frac{e^{a_j t}-1}{t}\right)\quad\hbox{and}\quad
f(t)=e^t-1.
$$
So,
\begin{equation}
D_n(x|a_1,\dots,a_r)\sim\left(\prod_{j=1}^r\left(\frac{e^{a_j t}-1}{t}\right),
e^t-1\right)\,.
\label{10b}
\end{equation}
$\widehat D_n(x|a_1,\dots,a_r)$ is the Sheffer sequences for
$$
g(t)=\prod_{j=1}^r\left(\frac{e^{a_j t}-1}{t e^{a_j t}}\right)\quad\hbox{and}\quad
f(t)=e^t-1.
$$
So,
\begin{equation}
\widehat D_n(x|a_1,\dots,a_r)\sim\left(\prod_{j=1}^r\left(\frac{e^{a_j t}-1}{t e^{a_j t}}\right), e^t-1\right)\,.
\label{11b}
\end{equation}

%%%%%%%%%%%%%%%%%%%%%%%%%(1)(a)%%%%%%%%%%%%%%%%%%%%%%%%%%%%%%%%%%%%%%

\subsection{Explicit expressions}

Recall that Barnes' multiple Bernoulli polynomials $B_n(x|a_1,\dots,a_r)$ are defined by the generating function as
\begin{equation}
\frac{t^r}{\prod_{j=1}^r(e^{a_j t}-1)}e^{x t}=\sum_{n=0}^\infty B_n(x|a_1,\dots,a_r)\frac{t^n}{n!}\,,
\label{bmb}
\end{equation}
where $a_1,\dots,a_r\ne 0$ (\cite{JKKLPR, Kim3, BKKL}).
Let $(n)_j=n(n-1)\cdots(n-j+1)$ ($j\ge 1$) with $(n)_0=1$. The (signed) Stirling numbers of the first kind $S_1(n,m)$ are defined by
$$
(x)_n=\sum_{m=0}^n S_1(n,m)x^m\,.
$$

\begin{theorem}
\begin{align}
D_n(x|a_1,\dots,a_r)&=\sum_{m=0}^n S_1(n,m)B_m(x|a_1,\dots,a_r)
\label{100a}\\
&=\sum_{j=0}^n\left(\sum_{l=j}^n\binom{n}{l}S_1(l,j)D_{n-l}(a_1,\dots,a_r)\right)x^j
\label{100b}\\
&=\sum_{m=0}^n\binom{n}{m}D_{n-m}(a_1,\dots,a_r)(x)_m\,,
\label{100c}
\end{align}
\begin{align}
\widehat D_n(x|a_1,\dots,a_r)&=\sum_{m=0}^n S_1(n,m)B_m(x+a_1+\cdots+a_r|a_1,\dots,a_r)
\label{100adash}\\
&=\sum_{j=0}^n\left(\sum_{l=j}^n\binom{n}{l}S_1(l,j)\widehat D_{n-l}(a_1,\dots,a_r)\right)x^j
\label{100bdash}\\
&=\sum_{m=0}^n\binom{n}{m}\widehat D_{n-m}(a_1,\dots,a_r)(x)_m\,.
\label{100cdash}
\end{align}
\label{th100}
\end{theorem}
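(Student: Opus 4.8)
The plan is to derive (\ref{100a}) and (\ref{100adash}) directly from the change-of-basis formula (\ref{uc19}), and then to read off the remaining four identities as formal consequences. The crucial observation is that the Barnes multiple Bernoulli polynomials are themselves a Sheffer sequence sharing the invertible factor $g(t)$ with the Daehee polynomials. Reading off (\ref{bmb}) through Lemma \ref{th234} with $f(t)=t$ (so $\bar f(t)=t$), one sees that $B_m(x|a_1,\dots,a_r)\sim\bigl(\prod_{j=1}^r\frac{e^{a_jt}-1}{t},\,t\bigr)$, which carries exactly the same invertible series $g(t)$ as in (\ref{10b}) but with $f(t)=t$ in place of $e^t-1$. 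First I would apply (\ref{uc19}) with $s_n=D_n$ and $r_m=B_m$: since the two sequences share $g(t)$, the factor $h\bigl(\bar f(t)\bigr)/g\bigl(\bar f(t)\bigr)$ collapses to $1$, and with $\bar f(t)=\ln(1+t)$ the coefficients reduce to $C_{n,m}=\frac{1}{m!}\ang{(\ln(1+t))^m|x^n}$. Invoking the standard generating-function identity $\frac{1}{m!}(\ln(1+t))^m=\sum_{k\ge m}S_1(k,m)\frac{t^k}{k!}$ together with (\ref{uc9}) gives $C_{n,m}=S_1(n,m)$, which is precisely (\ref{100a}).

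For the second kind I would first locate the shift that aligns the two invertible factors. Writing $c=a_1+\cdots+a_r$, the generating function (\ref{bmb}) evaluated at $x+c$ picks up a factor $e^{ct}=\prod_{j=1}^r e^{a_jt}$, so through Lemma \ref{th234} one finds $B_m(x+c|a_1,\dots,a_r)\sim\bigl(\prod_{j=1}^r\frac{e^{a_jt}-1}{te^{a_jt}},\,t\bigr)$, whose $g(t)$ now matches the one in (\ref{11b}). Repeating the computation above verbatim—again the shared $g(t)$ cancels in (\ref{uc19}) and the same Stirling generating function appears—yields $\widehat D_n(x|a_1,\dots,a_r)=\sum_{m=0}^n S_1(n,m)B_m(x+c|a_1,\dots,a_r)$, which is (\ref{100adash}).

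To obtain (\ref{100c}) and (\ref{100cdash}) I would use the Sheffer addition formula (\ref{uc17}). For both kinds $f(t)=e^t-1$, so the associated sequence $p_n(x)=g(t)s_n(x)\sim(1,e^t-1)$ is the falling factorial $(x)_n$; this is the standard associated sequence for $e^t-1$ and is quickly confirmed against (\ref{uc15}). Substituting into (\ref{uc17}), setting $x=0$ (so that the $x$-factor becomes $D_j(a_1,\dots,a_r)$, and likewise $\widehat D_j(a_1,\dots,a_r)$), and reindexing by $m=n-j$ gives (\ref{100c}) and (\ref{100cdash}). Finally, (\ref{100b}) and (\ref{100bdash}) follow by expanding $(x)_m=\sum_{j=0}^m S_1(m,j)x^j$ inside (\ref{100c}) and (\ref{100cdash}) and interchanging the order of summation.

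The main obstacle is conceptual rather than computational: recognizing that the Barnes-Bernoulli and Barnes-Daehee sequences can be placed over a common invertible series $g(t)$—directly for the first kind, and only after the shift by $c=a_1+\cdots+a_r$ for the second—so that the change-of-basis coefficients in (\ref{uc19}) degenerate to pure Stirling numbers $S_1(n,m)$. Once this alignment is seen, every remaining step is routine bookkeeping with the standard identities for $S_1(n,m)$ and the falling factorials $(x)_m$.
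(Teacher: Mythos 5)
Your proof is correct, but it takes a genuinely different route from the paper's on all six identities. For (\ref{100a}) and (\ref{100adash}) the paper never invokes the connection-constants formula (\ref{uc19}); instead it argues via uniqueness of associated sequences: since $\prod_{j=1}^r\bigl((e^{a_jt}-1)/t\bigr)D_n(x|a_1,\dots,a_r)\sim(1,e^t-1)$ and $(x)_n\sim(1,e^t-1)$, it writes $D_n(x|a_1,\dots,a_r)=\prod_{j=1}^r\bigl(t/(e^{a_jt}-1)\bigr)(x)_n$, expands $(x)_n=\sum_m S_1(n,m)x^m$, and recognizes the invertible operator applied to $x^m$ as $B_m(x|a_1,\dots,a_r)$, the extra factor $e^{(a_1+\cdots+a_r)t}$ producing the shift in the second kind --- the same alignment of invertible series you exploit, but in operator form rather than through the collapse of $h\bigl(\bar f(t)\bigr)/g\bigl(\bar f(t)\bigr)$ in (\ref{uc19}); your version requires the extra (correct) verification via Lemma \ref{th234} that $B_m(x|a_1,\dots,a_r)$ and the shifted $B_m(x+c|a_1,\dots,a_r)$ are Sheffer for $(g(t),t)$ with the matching $g$. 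For (\ref{100b}) and (\ref{100bdash}) the paper computes afresh from (\ref{uc16}), pairing $\prod_{j=1}^r\bigl(\ln(1+t)/((1+t)^{a_j}-1)\bigr)\bigl(\ln(1+t)\bigr)^j$ against $x^n$ and using the Daehee-number generating function; you obtain them for free from (\ref{100c}) by expanding $(x)_m=\sum_{j=0}^m S_1(m,j)x^j$ and interchanging summations, which is shorter and makes the logical dependence (\ref{100c})$\Rightarrow$(\ref{100b}) explicit. For (\ref{100c}) and (\ref{100cdash}) the paper evaluates $\ang{\cdot\,|x^n}$ directly against the generating functions, expanding $(1+t)^y=\sum_m(y)_m t^m/m!$; you instead specialize the Sheffer identity (\ref{uc17}) at $x=0$, which is in effect the $x=0$ case of Theorem \ref{th200} --- a result the paper itself proves later by the same (\ref{uc17}), so your ordering quietly unifies Theorems \ref{th100} and \ref{th200}. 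In sum, your route buys economy (one substantive connection-constant computation plus formal consequences), while the paper's buys uniform, self-contained functional evaluations for each line; both are sound.
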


\begin{proof}
Since
\begin{equation}
\prod_{j=1}^r\left(\frac{e^{a_j t}-1}{t}\right)D_n(x|a_1,\dots,a_r)\sim(1, e^t-1)
\label{10ab}
\end{equation}
and
\begin{equation}
(x)_n\sim(1,e^t-1)\,,
\label{51}
\end{equation}
we have
\begin{align*}
D_n(x|a_1,\dots,a_r)&=\prod_{j=1}^r\left(\frac{t}{e^{a_j t}-1}\right)(x)_n\\
&=\sum_{m=0}^n S_1(n,m)\prod_{j=1}^r\left(\frac{t}{e^{a_j t}-1}\right)x^m\\
&=\sum_{m=0}^n S_1(n,m)B_m(x|a_1,\dots,a_r)\,.
\end{align*}
So, we get (\ref{100a}).

Similarly, by
\begin{equation}
\prod_{j=1}^r\left(\frac{e^{a_j t}-1}{t e^{a_j t}}\right)\widehat D_n(x|a_1,\dots,a_r)\sim(1, e^t-1)
\label{11bc}
\end{equation}
and (\ref{51}), we have
\begin{align*}
\widehat D_n(x|a_1,\dots,a_r)&=\prod_{j=1}^r\left(\frac{t e^{a_j t}}{e^{a_j t}-1}\right)(x)_n\\
&=\sum_{m=0}^n S_1(n,m)\prod_{j=1}^r\left(\frac{t e^{a_j t}}{e^{a_j t}-1}\right)x^m\\
&=\sum_{m=0}^n S_1(n,m)e^{(a_1+\cdots+a_r)t}\prod_{j=1}^r\left(\frac{t}{e^{a_j t}-1}\right)x^m\\
&=\sum_{m=0}^n S_1(n,m)B_m(x+a_1+\cdots+a_r|a_1,\dots,a_r)\,.
\end{align*}
So, we get (\ref{100adash}).

%%%%%%%%%%%%%%%%%%%%%%%%%%(b)%%%%%%%%%%%%%%%%%%%%%%%%%%%%%%%%%%%

By (\ref{uc16}) with (\ref{10b}), we get
$$
D_n(x|a_1,\dots,a_r)=\sum_{j=0}^n\frac{1}{j!}\ang{\prod_{j=1}^r\left(\frac{\ln(1+t)}{(1+t)^{a_j}-1}\right)\bigl(\ln(1+t)\bigr)^j\Big|x^n}x^j\,.
$$
Since
\begin{align*}
&\ang{\prod_{j=1}^r\left(\frac{\ln(1+t)}{(1+t)^{a_j}-1}\right)\bigl(\ln(1+t)\bigr)^j\Big|x^n}\\
&=\ang{\prod_{j=1}^r\left(\frac{\ln(1+t)}{(1+t)^{a_j}-1}\right)\Big|\bigl(\ln(1+t)\bigr)^j x^n}\\
&=\ang{\prod_{j=1}^r\left(\frac{\ln(1+t)}{(1+t)^{a_j}-1}\right)\Big|j!\sum_{l=j}^\infty S_1(l,j)\frac{t^l}{l!}x^n}\\
&=j!\sum_{l=j}^n\binom{n}{l}S_1(l,j)\ang{\prod_{j=1}^r\left(\frac{\ln(1+t)}{(1+t)^{a_j}-1}\right)\Big|x^{n-l}}\\
&=j!\sum_{l=j}^n\binom{n}{l}S_1(l,j)\ang{\sum_{i=0}^\infty D_i(a_1,\dots,a_r)\frac{t^i}{i!}\Big|x^{n-l}}\\
&=j!\sum_{l=j}^n\binom{n}{l}S_1(l,j)D_{n-l}(a_1,\dots,a_r)\,,
\end{align*}
we obtain (\ref{100b}).

Similarly, by (\ref{uc16}) with (\ref{11b}), we get
$$
\widehat D_n(x|a_1,\dots,a_r)=\sum_{j=0}^n\frac{1}{j!}\ang{\prod_{j=1}^r\left(\frac{(1+t)^{a_j}\ln(1+t)}{(1+t)^{a_j}-1}\right)\bigl(\ln(1+t)\bigr)^j\Big|x^n}x^j\,.
$$
Since
\begin{align*}
&\ang{\prod_{j=1}^r\left(\frac{(1+t)^{a_j}\ln(1+t)}{(1+t)^{a_j}-1}\right)\bigl(\ln(1+t)\bigr)^j\Big|x^n}\\
&=\ang{\prod_{j=1}^r\left(\frac{(1+t)^{a_j}\ln(1+t)}{(1+t)^{a_j}-1}\right)\Big|\bigl(\ln(1+t)\bigr)^j x^n}\\
&=j!\sum_{l=j}^n\binom{n}{l}S_1(l,j)\ang{\prod_{j=1}^r\left(\frac{(1+t)^{a_j}\ln(1+t)}{(1+t)^{a_j}-1}\right)\Big|x^{n-l}}\\
&=j!\sum_{l=j}^n\binom{n}{l}S_1(l,j)\ang{\sum_{i=0}^\infty\widehat D_i(a_1,\dots,a_r)\frac{t^i}{i!}\Big|x^{n-l}}\\
&=j!\sum_{l=j}^n\binom{n}{l}S_1(l,j)\widehat D_{n-l}(a_1,\dots,a_r)\,,
\end{align*}
we obtain (\ref{100bdash}).

%%%%%%%%%%%%%%%%%%%%%%%%%(c)%%%%%%%%%%%%%%%%%%%%%%%%%%%%%%%%%%%%%%%%%

Next, we obtain that
\begin{align*}
D_n(y|a_1,\dots,a_r)&=\ang{\sum_{i=0}^\infty D_i(y|a_1,\dots,a_r)\frac{t^i}{i!}\Big|x^n}\\
&=\ang{\prod_{j=1}^r\left(\frac{\ln(1+t)}{(1+t)^{a_j}-1}\right)(1+t)^y\Big|x^n}\\
&=\ang{\prod_{j=1}^r\left(\frac{\ln(1+t)}{(1+t)^{a_j}-1}\right)\Big|\sum_{m=0}^\infty(y)_m\frac{t^m}{m!}x^n}\\
&=\sum_{m=0}^n(y)_m\binom{n}{m}\ang{\prod_{j=1}^r\left(\frac{\ln(1+t)}{(1+t)^{a_j}-1}\right)\Big|x^{n-m}}\\
&=\sum_{m=0}^n\binom{n}{m}D_{n-m}(a_1,\dots,a_r)(y)_m\,.
\end{align*}
Thus, we get the identity (\ref{100c}).

Similarly,
\begin{align*}
\widehat D_n(y|a_1,\dots,a_r)&=\ang{\sum_{i=0}^\infty\widehat D_i(y|a_1,\dots,a_r)\frac{t^i}{i!}\Big|x^n}\\
&=\ang{\prod_{j=1}^r\left(\frac{(1+t)^{a_j}\ln(1+t)}{(1+t)^{a_j}-1}\right)(1+t)^y\Big|x^n}\\
&=\ang{\prod_{j=1}^r\left(\frac{(1+t)^{a_j}\ln(1+t)}{(1+t)^{a_j}-1}\right)\Big|\sum_{m=0}^\infty(y)_m\frac{t^m}{m!}x^n}\\
&=\sum_{m=0}^n(y)_m\binom{n}{m}\ang{\prod_{j=1}^r\left(\frac{(1+t)^{a_j}\ln(1+t)}{(1+t)^{a_j}-1}\right)\Big|x^{n-m}}\\
&=\sum_{m=0}^n\binom{n}{m}\widehat D_{n-m}(a_1,\dots,a_r)(y)_m\,.
\end{align*}
Thus, we get the identity (\ref{100cdash}).
\qed\end{proof}

%%%%%%%%%%%%%%%%%%%%%%%%%%%%%%(2)%%%%%%%%%%%%%%%%%%%%%%%%%%%%%%%

\subsection{Sheffer identity}

\begin{theorem}
\begin{align}
D_n(x+y|a_1,\dots,a_r)&=\sum_{j=0}^n\binom{n}{j}D_j(x|a_1,\dots,a_r)(y)_{n-j}\,,
\label{20}\\
\widehat D_n(x+y|a_1,\dots,a_r)&=\sum_{j=0}^n\binom{n}{j}\widehat D_j(x|a_1,\dots,a_r)(y)_{n-j}\,.
\label{21}
\end{align}
\label{th200}
\end{theorem}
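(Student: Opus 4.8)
The plan is to apply the Sheffer addition formula (\ref{uc17}) directly. Recall that for any Sheffer sequence $s_n(x)\sim\bigl(g(t),f(t)\bigr)$ we have
$$
s_n(x+y)=\sum_{j=0}^n\binom{n}{j}s_j(x)p_{n-j}(y),\qquad p_n(x)=g(t)s_n(x),
$$
and that $p_n(x)$ is the associated sequence $p_n(x)\sim\bigl(1,f(t)\bigr)$. The entire content of the theorem is therefore the identification of the associated sequence $p_n(x)$ in each of the two cases.

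First I would treat the first kind. By (\ref{10b}) the relevant pair is $g(t)=\prod_{j=1}^r\bigl((e^{a_j t}-1)/t\bigr)$ and $f(t)=e^t-1$, so the associated sequence is $p_n(x)\sim\bigl(1,e^t-1\bigr)$. By (\ref{51}) this is exactly the falling factorial, $p_n(x)=(x)_n$; indeed this identification is already recorded in (\ref{10ab}). Substituting $s_n=D_n(\,\cdot\,|a_1,\dots,a_r)$ and $p_{n-j}(y)=(y)_{n-j}$ into the addition formula yields (\ref{20}) immediately.

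The second kind is handled identically. Here (\ref{11b}) gives the same delta series $f(t)=e^t-1$, only the invertible factor $g(t)$ differs; consequently the associated sequence is again $p_n(x)\sim\bigl(1,e^t-1\bigr)=(x)_n$, as noted in (\ref{11bc}) together with (\ref{51}). Applying (\ref{uc17}) to $s_n=\widehat D_n(\,\cdot\,|a_1,\dots,a_r)$ with $p_{n-j}(y)=(y)_{n-j}$ produces (\ref{21}).

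There is essentially no obstacle here: the only step requiring attention is the observation that both families are Sheffer for the \emph{same} delta series $e^t-1$, so that in each case the binomial-type sequence $p_n$ appearing in (\ref{uc17}) collapses to the falling factorial $(x)_n$ independently of the choice of $a_1,\dots,a_r$. Once this is seen, both identities follow from a single invocation of the addition formula.
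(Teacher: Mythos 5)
Your proposal is correct and follows exactly the paper's own argument: the paper likewise proves Theorem \ref{th200} by applying (\ref{uc17}) to (\ref{10b}) and (\ref{11b}), identifying $p_n(x)=g(t)s_n(x)=(x)_n\sim(1,e^t-1)$ in each case. Nothing is missing.
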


\begin{proof}
By (\ref{10b}) with
\begin{align*}
p_n(x)&=\prod_{j=1}^r\left(\frac{e^{a_j t}-1}{t}\right)D_n(x|a_1,\dots,a_r)\\
&=(x)_n\sim(1,e^t-1)\,,
\end{align*}
using (\ref{uc17}), we have (\ref{20}).

By (\ref{11b}) with
\begin{align*}
p_n(x)&=\prod_{j=1}^r\left(\frac{e^{a_j t}-1}{t e^{a_j t}}\right)\widehat D_n(x|a_1,\dots,a_r)\\
&=(x)_n\sim(1,e^t-1)\,,
\end{align*}
using (\ref{uc17}), we have (\ref{21}).
\qed\end{proof}

%%%%%%%%%%%%%%%%%%%%%%%%%%%%(3)%%%%%%%%%%%%%%%%%%%%%%%%%%%%%%%%%%%%

\subsection{Difference relations}

\begin{theorem}
\begin{align}
D_n(x+1|a_1,\dots,a_r)-D_n(x|a_1,\dots,a_r)&=n D_{n-1}(x|a_1,\dots,a_r)\,,
\label{25a}\\
\widehat D_n(x+1|a_1,\dots,a_r)-\widehat D_n(x|a_1,\dots,a_r)&=n\widehat D_{n-1}(x|a_1,\dots,a_r)\,.
\label{25b}
\end{align}
\end{theorem}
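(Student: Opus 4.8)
The plan is to read off both identities directly from the Sheffer recurrence (\ref{uc15}), exploiting the fact that the delta series $f(t)=e^t-1$ is common to both families. First I would recall from (\ref{10b}) that $D_n(x|a_1,\dots,a_r)\sim\bigl(g(t),e^t-1\bigr)$, so applying (\ref{uc15}) with this particular $f(t)$ gives
$$
(e^t-1)D_n(x|a_1,\dots,a_r)=n D_{n-1}(x|a_1,\dots,a_r).
$$

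The key observation is then that $e^t$ acts on $\mathbb P$ as the unit shift operator: taking $y=1$ in (\ref{uc13}) gives $e^t p(x)=p(x+1)$ for every polynomial $p$. Applying this to the left-hand side of the display above yields
$$
(e^t-1)D_n(x|a_1,\dots,a_r)=D_n(x+1|a_1,\dots,a_r)-D_n(x|a_1,\dots,a_r),
$$
and comparing the two displays produces (\ref{25a}) immediately.

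For the second kind the argument is identical. By (\ref{11b}) the polynomials $\widehat D_n(x|a_1,\dots,a_r)$ form a Sheffer sequence for the \emph{same} delta series $e^t-1$ (only the invertible factor $g(t)$ differs), so (\ref{uc15}) gives $(e^t-1)\widehat D_n(x|a_1,\dots,a_r)=n\widehat D_{n-1}(x|a_1,\dots,a_r)$, and again interpreting $e^t$ as the unit shift via (\ref{uc13}) rewrites the left-hand side as the forward difference $\widehat D_n(x+1|a_1,\dots,a_r)-\widehat D_n(x|a_1,\dots,a_r)$, giving (\ref{25b}).

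There is no real obstacle here: the content of the theorem is simply the coincidence of the abstract Sheffer recurrence (\ref{uc15}) with the concrete forward-difference operator $e^t-1$ on $\mathbb P$, and the only thing to notice is that both Barnes-type Daehee families share the delta series $f(t)=e^t-1$. In particular, no manipulation of the explicit generating functions (\ref{barnesdaehee1})--(\ref{barnesdaehee2}) is required.
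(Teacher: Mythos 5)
Your proposal is correct and follows exactly the paper's own argument: apply (\ref{uc15}) with the delta series $e^t-1$ from (\ref{10b}) and (\ref{11b}), then use (\ref{uc13}) to read $e^t$ as the unit shift so that $e^t-1$ becomes the forward difference. Nothing is missing, and the observation that both families share $f(t)=e^t-1$ is precisely what the paper exploits.
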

\begin{proof}
By (\ref{uc15}) with (\ref{10b}), we get
$$
(e^t-1)D_n(x|a_1,\dots,a_r)=n D_{n-1}(x|a_1,\dots,a_r)\,.
$$
By (\ref{uc13}), we have (\ref{25a}).

Similarly, by (\ref{uc15}) with (\ref{11b}), we get
$$
(e^t-1)\widehat D_n(x|a_1,\dots,a_r)=n\widehat D_{n-1}(x|a_1,\dots,a_r)\,.
$$
By (\ref{uc13}), we have (\ref{25b}).
\qed\end{proof}

%%%%%%%%%%%%%%%%%%%%%%%%%%%%(4)%%%%%%%%%%%%%%%%%%%%%%%%%%%%%%%%%%%%

\subsection{Recurrence}

\begin{theorem}
\begin{align}
D_{n+1}(x|a_1,\dots,a_r)
&=x D_n(x-1|a_1,\dots,a_r)\notag\\
&\quad -\sum_{m=0}^n\left(\sum_{i=m}^n\sum_{l=i}^n\sum_{j=1}^r\frac{1}{i+1}\binom{n}{l}\binom{i+1}{m}S_1(l,i)\right.\notag\\
&\qquad\left. \times B_{i+1-m}(-a_j)^{i+1-m}D_{n-l}(a_1,\dots,a_r)\right)(x-1)^m\,, \label{30}\\
\widehat D_{n+1}(x|a_1,\dots,a_r)
&=\left(x+\sum_{j=1}^r a_j\right)\widehat D_n(x-1|a_1,\dots,a_r)\notag\\
&\quad -\sum_{m=0}^n\left(\sum_{i=m}^n\sum_{l=i}^n\sum_{j=1}^r\frac{1}{i+1}\binom{n}{l}\binom{i+1}{m}S_1(l,i)\right.\notag\\
&\qquad\left. \times B_{i+1-m}(-a_j)^{i+1-m}\widehat D_{n-l}(a_1,\dots,a_r)\right)(x-1)^m\,, \label{31}
\end{align}
where $B_n$ is the $n$th ordinary Bernoulli number.
\label{th300}
\end{theorem}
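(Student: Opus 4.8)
The plan is to apply the standard recurrence formula for Sheffer sequences. For $s_n(x)\sim(g(t),f(t))$ one has \cite{Roman}
$$
s_{n+1}(x)=\left(x-\frac{g'(t)}{g(t)}\right)\frac{1}{f'(t)}s_n(x)\,,
$$
where $f'(t),g'(t)$ are derivatives with respect to $t$ and $1/f'(t),\,g'(t)/g(t)$ act as operators on $\mathbb P$. For both families $f(t)=e^t-1$, so $f'(t)=e^t$ and $1/f'(t)=e^{-t}$; by (\ref{uc13}) the operator $e^{-t}$ sends $p(x)$ to $p(x-1)$. Hence $D_{n+1}(x|a_1,\dots,a_r)=x\,D_n(x-1|a_1,\dots,a_r)-\frac{g'(t)}{g(t)}D_n(x-1|a_1,\dots,a_r)$, which already yields the leading term $x\,D_n(x-1|\dots)$ of (\ref{30}). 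The whole problem thus reduces to evaluating $\frac{g'(t)}{g(t)}$ on $D_n(x-1|\dots)$.

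First I would compute $\frac{g'(t)}{g(t)}$ as a formal power series. Taking the logarithmic derivative of $g(t)=\prod_{j=1}^r\bigl((e^{a_jt}-1)/t\bigr)$ gives $\frac{g'(t)}{g(t)}=\sum_{j=1}^r\bigl(\frac{a_je^{a_jt}}{e^{a_jt}-1}-\frac1t\bigr)$. Applying the Bernoulli identity $\frac{ue^u}{e^u-1}=\sum_{k\ge0}B_k(-1)^k\frac{u^k}{k!}$ with $u=a_jt$, the $k=0$ term cancels the $1/t$, leaving $\frac{g'(t)}{g(t)}=\sum_{i=0}^\infty c_i t^i$ with $c_i=\sum_{j=1}^r\frac{B_{i+1}(-a_j)^{i+1}}{(i+1)!}$. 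This step is where the ordinary Bernoulli numbers enter, matching the $B_{i+1-m}(-a_j)^{i+1-m}$ appearing in the statement.

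Next I would expand the shifted polynomial. Replacing $x$ by $x-1$ in the already-proved expansion (\ref{100b}) gives $D_n(x-1|\dots)=\sum_{i=0}^n A_i(x-1)^i$ with $A_i=\sum_{l=i}^n\binom{n}{l}S_1(l,i)D_{n-l}(a_1,\dots,a_r)$. Since $t^p$ acts as $d^p/dx^p$ (again by (\ref{uc13})) and $\frac{d^p}{dx^p}(x-1)^i=(i)_p(x-1)^{i-p}$, applying $\sum_p c_pt^p$ and collecting the coefficient of $(x-1)^m$ (put $p=i-m$, so $(i)_{i-m}=i!/m!$) gives $\sum_{i=m}^n A_i\,\frac{i!}{m!}\,c_{i-m}$. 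Substituting $c_{i-m}$ and $A_i$ turns this into the quadruple sum over $i,l,j$; the single combinatorial simplification required is $\frac{i!}{m!\,(i+1-m)!}=\frac{1}{i+1}\binom{i+1}{m}$, after which the coefficient matches (\ref{30}) term by term. I expect this bookkeeping — aligning the factorials with the $\binom{i+1}{m}/(i+1)$ factor and keeping the three nested index ranges correct — to be the only genuine obstacle; everything else is formal.

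Finally, the hatted case is identical except that $g(t)=\prod_{j=1}^r\bigl((e^{a_jt}-1)/(te^{a_jt})\bigr)$ contributes an extra $-a_jt$ inside each logarithm, so $\frac{g'(t)}{g(t)}=\sum_{i\ge0}c_it^i-\sum_{j=1}^ra_j$. The constant $-\sum_ja_j$ combines with the $x$-term of the recurrence to produce the factor $\bigl(x+\sum_{j}a_j\bigr)\widehat D_n(x-1|\dots)$ in (\ref{31}), while the remaining series gives exactly the same double-indexed sum with $D_{n-l}$ replaced by $\widehat D_{n-l}$, using (\ref{100bdash}) in place of (\ref{100b}).
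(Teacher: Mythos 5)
Your proposal is correct and takes essentially the same route as the paper: both apply Roman's recurrence $s_{n+1}(x)=\bigl(x-\frac{g'(t)}{g(t)}\bigr)\frac{1}{f'(t)}s_n(x)$ with $\frac{1}{f'(t)}=e^{-t}$, substitute the explicit expansion (\ref{100b}) (resp.\ (\ref{100bdash})), and expand $\frac{g'(t)}{g(t)}$ via the Bernoulli series for $\frac{ue^u}{e^u-1}$, with the extra constant $-\sum_{j=1}^r a_j$ producing the factor $\bigl(x+\sum_{j=1}^r a_j\bigr)$ in the hatted case. The only difference is cosmetic bookkeeping: you precompute the power-series coefficients of $g'(t)/g(t)$ and act directly on the $(x-1)$-expansion, while the paper keeps the $1/t$ explicit (trading it for $x^{i+1}/(i+1)$) and applies the shift $e^{-t}$ last; the reduction $\frac{i!}{m!\,(i+1-m)!}=\frac{1}{i+1}\binom{i+1}{m}$ is the same in both.
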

\begin{proof}
By applying
\begin{equation}
s_{n+1}(x)=\left(x-\frac{g'(t)}{g(t)}\right)\frac{1}{f'(t)}s_n(x)
\label{35}
\end{equation}
(\cite[Corollary 3.7.2]{Roman}) with (\ref{10b}), we get
$$
D_{n+1}(x|a_1,\dots,a_r)=x D_n(x-1|a_1,\dots,a_r)-e^{-t}\frac{g'(t)}{g(t)}D_n(x|a_1,\dots,a_r)\,.
$$
Now,
\begin{align*}
\frac{g'(t)}{g(t)}&=(\ln g(t))'\\
&=\left(\sum_{j=1}^r\ln(e^{a_j t}-1)-r\ln t\right)'\\
&=\sum_{j=1}^r\frac{a_j e^{a_j t}}{e^{a_j t}-1}-\frac{r}{t}\\
&=\frac{\sum_{j=1}^r\prod_{i\ne j}(e^{a_i t}-1)(a_j t e^{a_j t}-e^{a_j t}+1)}{t\prod_{j=1}^r(e^{a_j t}-1)}\,.
\end{align*}
Since
\begin{align*}
\sum_{j=1}^r\frac{a_j t e^{a_j t}}{e^{a_j t}-1}-r
&=\frac{\sum_{j=1}^r\prod_{i\ne j}(e^{a_i t}-1)(a_j t e^{a_j t}-e^{a_j t}+1)}{\prod_{j=1}^r(e^{a_j t}-1)}\\
&=\frac{\frac{1}{2}\bigl(\sum_{j=1}^r a_1\cdots a_{j-1}a_j^2 a_{j+1}\cdots a_r\bigr)t^{r+1}+\cdots}{(a_1\cdots a_r)t^r+\cdots}\\
&=\frac{1}{2}\left(\sum_{j=1}^r a_j\right)t+\cdots
\end{align*}
is a series with order$\ge 1$, by (\ref{100b}) we have
\begin{align*}
&D_{n+1}(x|a_1,\dots,a_r)\\
&=x D_n(x-1|a_1,\dots,a_r)-e^{-t}\frac{\sum_{j=1}^r\frac{a_j t e^{a_j t}}{e^{a_j t}-1}-r}{t}D_n(x|a_1,\dots,a_r)\\
&=x D_n(x-1|a_1,\dots,a_r)-e^{-t}\frac{\sum_{j=1}^r\frac{a_j t e^{a_j t}}{e^{a_j t}-1}-r}{t}
\left(\sum_{i=0}^n\sum_{l=i}^n\binom{n}{l}S_1(l,i)D_{n-l}(a_1,\dots,a_r)x^i\right)\\
&=x D_n(x-1|a_1,\dots,a_r)\\
&\quad -\sum_{i=0}^n\sum_{l=i}^n\binom{n}{l}S_1(l,i)D_{n-l}(a_1,\dots,a_r)
e^{-t}\left(\sum_{j=1}^r\frac{a_j t e^{a_j t}}{e^{a_j t}-1}-r\right)\frac{x^{i+1}}{i+1}\,.
\end{align*}
Since
\begin{align}
e^{-t}\left(\sum_{j=1}^r\frac{a_j t e^{a_j t}}{e^{a_j t}-1}-r\right)x^{i+1}
&=e^{-t}\left(\sum_{j=1}^r\sum_{m=0}^\infty\frac{(-1)^m B_m a_j^m}{m!}t^m-r\right)x^{i+1}\notag\\
&=e^{-t}\left(\sum_{j=1}^r\sum_{m=0}^{i+1}\binom{i+1}{m}B_m(-a_j)^m x^{i+1-m}-r x^{i+1}\right)\notag\\
&=\sum_{j=1}^r\sum_{m=1}^{i+1}\binom{i+1}{m}B_m(-a_j)^m(x-1)^{i+1-m}\notag\\
&=\sum_{j=1}^r\sum_{m=0}^{i}\binom{i+1}{m}B_{i+1-m}(-a_j)^{i+1-m}(x-1)^m\,,
\label{38}
\end{align}
we have
\begin{align*}
D_{n+1}(x|a_1,\dots,a_r)
&=x D_n(x-1|a_1,\dots,a_r)\\
&\quad -\sum_{i=0}^n\sum_{l=i}^n\sum_{j=1}^r\sum_{m=0}^i\frac{1}{i+1}\binom{n}{l}\binom{i+1}{m}S_1(l,i)\\
&\qquad \times B_{i+1-m}(-a_j)^{i+1-m}D_{n-l}(a_1,\dots,a_r)(x-1)^m\\
&=x D_n(x-1|a_1,\dots,a_r)\\
&\quad -\sum_{m=0}^n\left(\sum_{i=m}^n\sum_{l=i}^n\sum_{j=1}^r\frac{1}{i+1}\binom{n}{l}\binom{i+1}{m}S_1(l,i)\right.\\
&\qquad\left. \times B_{i+1-m}(-a_j)^{i+1-m}D_{n-l}(a_1,\dots,a_r)\right)(x-1)^m\,,
\end{align*}
which is the identity (\ref{30}).

Next, by applying  (\ref{35}) with (\ref{11b}), we get
$$
\widehat D_{n+1}(x|a_1,\dots,a_r)=x\widehat D_n(x-1|a_1,\dots,a_r)-e^{-t}\frac{g'(t)}{g(t)}\widehat D_n(x|a_1,\dots,a_r)\,.
$$
Now,
\begin{align*}
\frac{g'(t)}{g(t)}&=(\ln g(t))'\\
&=\left(\sum_{j=1}^r\ln(e^{a_j t}-1)-r\ln t-\biggl(\sum_{j=1}^r a_j\biggr)t\right)'\\
&=\sum_{j=1}^r\frac{a_j e^{a_j t}}{e^{a_j t}-1}-\frac{r}{t}-\sum_{j=1}^r a_j\,.
\end{align*}
By (\ref{100bdash}) we have
\begin{align*}
&\widehat D_{n+1}(x|a_1,\dots,a_r)\\
&=\left(x+\sum_{j=1}^r a_j\right)\widehat D_n(x-1|a_1,\dots,a_r)-e^{-t}\frac{\sum_{j=1}^r\frac{a_j t e^{a_j t}}{e^{a_j t}-1}-r}{t}\widehat D_n(x|a_1,\dots,a_r)\\
&=\left(x+\sum_{j=1}^r a_j\right)\widehat D_n(x-1|a_1,\dots,a_r)\\
&\quad -e^{-t}\frac{\sum_{j=1}^r\frac{a_j t e^{a_j t}}{e^{a_j t}-1}-r}{t}
\left(\sum_{i=0}^n\sum_{l=i}^n\binom{n}{l}S_1(l,i)\widehat D_{n-l}(a_1,\dots,a_r)x^i\right)\\
&=\left(x+\sum_{j=1}^r a_j\right)\widehat D_n(x-1|a_1,\dots,a_r)\\
&\quad -\sum_{i=0}^n\sum_{l=i}^n\binom{n}{l}S_1(l,i)\widehat D_{n-l}(a_1,\dots,a_r)
e^{-t}\left(\sum_{j=1}^r\frac{a_j t e^{a_j t}}{e^{a_j t}-1}-r\right)\frac{x^{i+1}}{i+1}\,.
\end{align*}
By (\ref{38}),  we have the identity (\ref{31}).
\qed\end{proof}

%%%%%%%%%%%%%%%%%%%%%%%%%(5)%%%%%%%%%%%%%%%%%%%%%%%%%%%%%%%%%%%%
\subsection{Differentiation}

\begin{theorem}
\begin{align}
\frac{d}{dx}D_n(x|a_1,\dots,a_r)
&=n!\sum_{l=0}^{n-1}\frac{(-1)^{n-l-1}}{l!(n-l)}D_l(x|a_1,\dots,a_r)\,,
\label{500a}\\
\frac{d}{dx}\widehat D_n(x|a_1,\dots,a_r)
&=n!\sum_{l=0}^{n-1}\frac{(-1)^{n-l-1}}{l!(n-l)}\widehat D_l(x|a_1,\dots,a_r)\,.
\label{500b}
\end{align}
\label{th500}
\end{theorem}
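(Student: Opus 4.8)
The plan is to compute the derivative as the action of the operator $t$ and then rewrite $t$ as a power series in $f(t)=e^t-1$, exploiting only the lowering relation \eqref{uc15}. Since both $D_n(x|a_1,\dots,a_r)$ and $\widehat D_n(x|a_1,\dots,a_r)$ are Sheffer for the \emph{same} delta series $f(t)=e^t-1$ (they differ only in $g(t)$), and the computation never touches $g(t)$, the two identities \eqref{500a} and \eqref{500b} will follow from one and the same argument; I describe it for the first kind.

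First, by \eqref{uc13} the differentiation is literally the action of $t$, so $\frac{d}{dx}D_n(x|a_1,\dots,a_r)=t\,D_n(x|a_1,\dots,a_r)$. The key observation is that, since $f(t)=e^t-1$ gives $e^t=1+f(t)$, one has the logarithmic expansion
$$
t=\ln\bigl(1+f(t)\bigr)=\sum_{m=1}^\infty\frac{(-1)^{m-1}}{m}\,f(t)^m=\sum_{m=1}^\infty\frac{(-1)^{m-1}}{m}\,(e^t-1)^m\,.
$$

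Next I would iterate \eqref{uc15}: starting from $(e^t-1)D_n=nD_{n-1}$ and inducting on $m$, one obtains
$$
(e^t-1)^m D_n(x|a_1,\dots,a_r)=(n)_m\,D_{n-m}(x|a_1,\dots,a_r)=\frac{n!}{(n-m)!}\,D_{n-m}(x|a_1,\dots,a_r)\,,
$$
which vanishes for $m>n$ because $\deg D_n=n$; hence applying the series for $t$ to the polynomial $D_n$ truncates to a finite sum, and no convergence issue arises. Substituting term by term yields
$$
t\,D_n(x|a_1,\dots,a_r)=\sum_{m=1}^n\frac{(-1)^{m-1}}{m}\cdot\frac{n!}{(n-m)!}\,D_{n-m}(x|a_1,\dots,a_r)\,.
$$

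Finally I would reindex by setting $l=n-m$, so that $m=n-l$ runs over $0\le l\le n-1$; this sends $(-1)^{m-1}\mapsto(-1)^{n-l-1}$, $\tfrac1m\mapsto\tfrac1{n-l}$, and $\tfrac{n!}{(n-m)!}\mapsto\tfrac{n!}{l!}$, turning the sum into $n!\sum_{l=0}^{n-1}\frac{(-1)^{n-l-1}}{l!(n-l)}D_l(x|a_1,\dots,a_r)$, which is \eqref{500a}. The identical chain of steps, using \eqref{11b} in place of \eqref{10b}, gives \eqref{500b}. There is essentially no genuine obstacle here: the only point deserving a word of care is the justification that the logarithmic series for $t$ may be applied to $D_n$ term by term, and this is immediate once one notes that all but finitely many terms annihilate the degree-$n$ polynomial $D_n$.
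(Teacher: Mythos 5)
Your proof is correct. It reaches the theorem by a slightly different, more self-contained route than the paper: the paper invokes Roman's derivative formula for Sheffer sequences, $\frac{d}{dx}s_n(x)=\sum_{l=0}^{n-1}\binom{n}{l}\ang{\bar f(t)\,|\,x^{n-l}}s_l(x)$ (\cite[Theorem~2.3.12]{Roman}), and then evaluates $\ang{\ln(1+t)|x^{n-l}}=(-1)^{n-l-1}(n-l-1)!$ from the logarithmic series, whereas you avoid that citation entirely by writing $t=\ln\bigl(1+f(t)\bigr)$ with $f(t)=e^t-1$ and iterating the lowering relation \eqref{uc15} to get $(e^t-1)^m D_n=(n)_m D_{n-m}$. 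At heart these are the same computation: your coefficient $\frac{(-1)^{m-1}}{m}(n)_m$ with $m=n-l$ is exactly $\binom{n}{l}\ang{\ln(1+t)|x^{n-l}}$, and indeed your argument amounts to re-proving the needed instance of Roman's theorem from just \eqref{uc13} and \eqref{uc15}. What your version buys is transparency on two points the paper leaves implicit: first, that $g(t)$ never enters, so \eqref{500a} and \eqref{500b} follow from one and the same argument (the paper disposes of the second identity with a bare ``similarly, with \eqref{11b}''); second, the justification for applying the series for $t$ term by term, which you correctly settle by noting that $f(t)^m$ has order $m$ and hence annihilates the degree-$n$ polynomial $D_n$ for $m>n$. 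The induction giving $(e^t-1)^mD_n=(n)_mD_{n-m}$ is legitimate since each $D_{n-k}$ belongs to the same Sheffer sequence, and your reindexing $l=n-m$ is arithmetically exact, so there is no gap.
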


\begin{proof}
We shall use
$$
\frac{d}{dx}s_n(x)=\sum_{l=0}^{n-1}\binom{n}{l}\ang{\bar f(t)|x^{n-l}}s_l(x)
$$
({\it Cf.} \cite[Theorem 2.3.12]{Roman}).
Since
\begin{align*}
\ang{\bar f(t)|x^{n-l}}&=\ang{\ln(1+t)|x^{n-l}}\\
&=\ang{\sum_{m=1}^\infty\frac{(-1)^{m-1}t^m}{m}\Big|x^{n-l}}\\
&=\sum_{m=1}^{n-l}\frac{(-1)^{m-1}}{m}\ang{t^m|x^{n-l}}\\
&=\sum_{m=1}^{n-l}\frac{(-1)^{m-1}}{m}(n-l)!\delta_{m,n-l}\\
&=(-1)^{n-l-1}(n-l-1)!\,,
\end{align*}
with (\ref{10b}), we have
\begin{align*}
\frac{d}{dx}D_n(x|a_1,\dots,a_r)&=\sum_{l=0}^{n-1}\binom{n}{l}(-1)^{n-l-1}(n-l-1)!D_l(x|a_1,\dots,a_r)\\
&=n!\sum_{l=0}^{n-1}\frac{(-1)^{n-l-1}}{l!(n-l)}D_l(x|a_1,\dots,a_r)\,,
\end{align*}
which is the identity (\ref{500a}).
Similarly, with (\ref{11b}), we have the identity (\ref{500b}).
\qed\end{proof}

%%%%%%%%%%%%%%%%%%%%%%%%%(6)%%%%%%%%%%%%%%%%%%%%%%%%%%%%%%%%%%%%

\subsection{More relations}

The classical Cauchy numbers $c_n$ are defined by
$$
\frac{t}{\ln(1+t)}=\sum_{n=0}^\infty c_n\frac{t^n}{n!}
$$
(see e.g. \cite{Com,Ko1}).

\begin{theorem}
\begin{align}
D_n(x|a_1,\dots,a_r)
&=x D_{n-1}(x-1|a_1,\dots,a_r)\notag\\
&\quad +\frac{r}{n}\sum_{l=0}^n\binom{n}{l}c_l D_{n-l}(x-1|a_1,\dots,a_r)\notag\\
&\quad -\frac{1}{n}\sum_{j=1}^r\sum_{l=0}^n \binom{n}{l}a_j c_l D_{n-l}(x+a_j-1|a_1,\dots,a_r,a_j)\,,
\label{40}\\
\widehat D_n(x|a_1,\dots,a_r)
&=\left(x+\sum_{j=1}^r a_j\right)\widehat D_{n-1}(x-1|a_1,\dots,a_r)\notag\\
&\quad +\frac{r}{n}\sum_{l=0}^n\binom{n}{l}c_l\widehat D_{n-l}(x-1|a_1,\dots,a_r)\notag\\
&\quad -\frac{1}{n}\sum_{j=1}^r\sum_{l=0}^n \binom{n}{l}a_j c_l\widehat D_{n-l}(x-1|a_1,\dots,a_r,a_j)\,.
\label{42}
\end{align}
\label{th40}
\end{theorem}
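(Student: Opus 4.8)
The plan is to run the Sheffer recurrence (\ref{35}) exactly as in the proof of Theorem \ref{th300}, but to expand the action of $g'(t)/g(t)$ in terms of Cauchy numbers instead of Bernoulli numbers. Replacing $n$ by $n-1$ in (\ref{35}) and using $f(t)=e^t-1$, $f'(t)=e^t$, $1/f'(t)=e^{-t}$ together with $e^{-t}D_{n-1}(x|a_1,\dots,a_r)=D_{n-1}(x-1|a_1,\dots,a_r)$ from (\ref{uc13}), I would first record
$$
D_n(x|a_1,\dots,a_r)=x\,D_{n-1}(x-1|a_1,\dots,a_r)-\frac{g'(t)}{g(t)}D_{n-1}(x-1|a_1,\dots,a_r),
$$
with $\frac{g'(t)}{g(t)}=\sum_{j=1}^r\frac{a_j e^{a_j t}}{e^{a_j t}-1}-\frac{r}{t}$ for the first kind (already computed in Theorem \ref{th300}), and the same expression diminished by $\sum_{j=1}^r a_j$ for the second kind. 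In the second-kind case that extra constant $\sum_j a_j$ merges with the leading $x$ to give the prefactor $x+\sum_{j=1}^r a_j$ in (\ref{42}).

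The heart of the argument is to evaluate $\frac{g'(t)}{g(t)}D_{n-1}(x-1|\dots)$, which I would do on the level of generating functions. Since $t=d/dx$ acts on $(1+s)^x=e^{x\ln(1+s)}$ as multiplication by $\ln(1+s)$, applying any series $\phi(t)$ to the Sheffer sequence and summing against $s^m/m!$ simply multiplies the generating function by $\phi(\ln(1+s))$. Writing $L=\ln(1+s)$, the factor $\frac{a_j e^{a_j t}}{e^{a_j t}-1}$ becomes $\frac{a_j(1+s)^{a_j}}{(1+s)^{a_j}-1}$ and $\frac{r}{t}$ becomes $\frac{r}{L}$. The key observation is the generating-function identity
$$
\frac{a_j(1+s)^{a_j}}{(1+s)^{a_j}-1}\prod_{i=1}^r\frac{L}{(1+s)^{a_i}-1}(1+s)^{x-1}=\frac{a_j}{L}\sum_{m\ge0}D_m(x+a_j-1|a_1,\dots,a_r,a_j)\frac{s^m}{m!},
$$
obtained by noticing that multiplying by $\frac{a_j(1+s)^{a_j}}{(1+s)^{a_j}-1}$ appends one more factor $\frac{L}{(1+s)^{a_j}-1}$ and shifts $x$ to $x+a_j$; the $\frac{r}{L}$ piece is handled identically against $\sum_m D_m(x-1|\dots)\frac{s^m}{m!}$. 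For the second kind the same manipulation appends $\frac{(1+s)^{a_j}L}{(1+s)^{a_j}-1}$ and leaves the argument $x-1$ unchanged, which is exactly why (\ref{42}) carries $\widehat D_{n-l}(x-1|\dots,a_j)$ rather than a shifted argument. I would then substitute $\frac{a_j}{L}=\frac{a_j}{s}\sum_l c_l\frac{s^l}{l!}$ and $\frac{r}{L}=\frac{r}{s}\sum_l c_l\frac{s^l}{l!}$ from the definition of the Cauchy numbers, turning each product into a binomial (Cauchy) convolution.

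The one genuinely delicate point, and the step I expect to be the main obstacle, is the factor $1/s$ produced by $\frac{a_j}{L}$ and $\frac{r}{L}$. Individually the operators $\frac{a_j e^{a_j t}}{e^{a_j t}-1}$ and $\frac{r}{t}$ have a pole at $t=0$ and are not admissible series, so each of the two generating functions above is singular at $s=0$; only the full combination $g'(t)/g(t)$ is a bona fide series of nonnegative order. I would therefore verify that the order-$s^{-1}$ contributions cancel, using that the constant term $D_0$ of each generating function equals the product of the reciprocals of its parameters: the residue $\sum_j a_j\cdot a_j^{-1}\prod_i a_i^{-1}=r\prod_i a_i^{-1}$ coming from the first group matches $r\prod_i a_i^{-1}$ from the $\frac{r}{L}$ term. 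After this cancellation the surviving series may be divided by $s$ and reindexed $m\mapsto m+1$; extracting the coefficient of $s^{n-1}/(n-1)!$ then produces precisely the factor $1/n$ together with the sums $\sum_{l=0}^n\binom{n}{l}c_l$ in (\ref{40}) and (\ref{42}). Moving the two resulting contributions of $\frac{g'(t)}{g(t)}D_{n-1}(x-1|\dots)$ to the right-hand side yields the stated identities, the second-kind case differing only by the bookkeeping noted above.
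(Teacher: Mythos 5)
Your proposal is correct, but it takes a recognizably different route from the paper's own proof of Theorem \ref{th40}. The paper works entirely inside the umbral pairing: it writes $D_n(y|a_1,\dots,a_r)=\ang{\prod_{j}\bigl(\ln(1+t)/((1+t)^{a_j}-1)\bigr)(1+t)^y\,\big|\,x^n}$, applies the transfer $\ang{f(t)|x^n}=\ang{\partial_t f(t)|x^{n-1}}$, and after the product rule the piece $\partial_t(1+t)^y$ yields $y\,D_{n-1}(y-1|\cdots)$, while $\partial_t\prod(\cdots)$ is kept as a \emph{single} series $\frac{1}{1+t}\prod_i(\cdots)\cdot\frac{1}{t}\sum_j\bigl(\frac{t}{\ln(1+t)}-\frac{a_jt(1+t)^{a_j}}{(1+t)^{a_j}-1}\bigr)$ whose numerator is observed to have order $\ge 1$, so the $1/t$ is absorbed through $\frac{h(t)}{t}x^{n-1}=\frac{1}{n}h(t)x^n$; no individually singular operator ever appears. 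You instead start from the recurrence (\ref{35}) --- the tool the paper reserves for Theorem \ref{th300} --- and evaluate $g'(t)/g(t)$ at the generating-function level, splitting it into the two Laurent pieces $\sum_j a_je^{a_jt}/(e^{a_jt}-1)$ and $r/t$ and cancelling the $s^{-1}$ residues by hand via $D_0=\prod_j a_j^{-1}$ and $\widehat D_0(\cdot|a_1,\dots,a_r,a_j)=a_j^{-1}\prod_i a_i^{-1}$; your residue check is exactly right, and the rest of the bookkeeping (coefficient of $s^{n-1}/(n-1)!$ producing the $1/n$) is sound. The two arguments share all the essential content: the factorization $\frac{a_jt(1+t)^{a_j}}{(1+t)^{a_j}-1}=a_j\cdot\frac{t}{\ln(1+t)}\cdot\frac{(1+t)^{a_j}\ln(1+t)}{(1+t)^{a_j}-1}$, which appends the parameter $a_j$ (with the shift to $x+a_j-1$ in the first kind and no shift in the second --- you identified correctly why (\ref{42}) has the unshifted argument), and the Cauchy expansion of $t/\ln(1+t)$. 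What your route buys is uniformity: Theorems \ref{th300} and \ref{th40} become the same recurrence (\ref{35}) expanded once in Bernoulli and once in Cauchy numbers, and the prefactor $x+\sum_j a_j$ in (\ref{42}) drops out immediately from the constant $-\sum_j a_j$ in $g'(t)/g(t)$, rather than emerging from a separate bracket term $\bigl(\sum_j a_j\bigr)\widehat D_{n-1}(y-1|\cdots)$ as in the paper. What the paper's route buys is that it never leaves the umbral algebra: by grouping the singular summands before dividing by $t$, it manipulates only honest power series and sidesteps the formal-Laurent bookkeeping and residue cancellation that you correctly flagged as the delicate step of your approach.
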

\begin{proof}
For $n\ge 1$, we have
\begin{align*}
D_n(y|a_1,\dots,a_r)&=\ang{\sum_{l=0}^\infty D_l(y|a_1,\dots,a_r)\frac{t^l}{l!}\Big|x^n}\\
&=\ang{\prod_{j=1}^r\left(\frac{\ln(1+t)}{(1+t)^{a_j}-1}\right)(1+t)^y\Big|x^n}\\
&=\ang{\partial_t\left(\prod_{j=1}^r\left(\frac{\ln(1+t)}{(1+t)^{a_j}-1}\right)(1+t)^y\right)\Big|x^{n-1}}\\
&=\ang{\prod_{j=1}^r\left(\frac{\ln(1+t)}{(1+t)^{a_j}-1}\right)\left(\partial_t(1+t)^y\right)\Big|x^{n-1}}\\
&\quad +\ang{\left(\partial_t\prod_{j=1}^r\left(\frac{\ln(1+t)}{(1+t)^{a_j}-1}\right)\right)(1+t)^y\Big|x^{n-1}}\\
&=y D_{n-1}(y-1|a_1,\dots,a_r)\\
&\quad +\ang{\left(\partial_t\prod_{j=1}^r\left(\frac{\ln(1+t)}{(1+t)^{a_j}-1}\right)\right)(1+t)^y\Big|x^{n-1}}\,.
\end{align*}
Observe that
\begin{align*}
&\partial_t\prod_{j=1}^r\left(\frac{\ln(1+t)}{(1+t)^{a_j}-1}\right)\\
&=\sum_{j=1}^r\prod_{i\ne j}\left(\frac{\ln(1+t)}{(1+t)^{a_i}-1}\right)
\frac{\frac{1}{1+t}\bigl((1+t)^{a_j}-1\bigr)-\ln(1+t)\bigl(a_j(1+t)^{a_j-1}\bigr)}{\bigl((1+t)^{a_j}-1\bigr)^2}\\
&=\frac{1}{1+t}\prod_{i=1}^r\left(\frac{\ln(1+t)}{(1+t)^{a_i}-1}\right)\sum_{j=1}^r\left(\frac{1}{\ln(1+t)}-\frac{a_j(1+t)^{a_j}}{(1+t)^{a_j}-1}\right)\\
&=\frac{1}{1+t}\prod_{i=1}^r\left(\frac{\ln(1+t)}{(1+t)^{a_i}-1}\right)\frac{\sum_{j=1}^r\left(\frac{t}{\ln(1+t)}-\frac{a_j t(1+t)^{a_j}}{(1+t)^{a_j}-1}\right)}{t}\,.
\end{align*}
Since
$$
\sum_{j=1}^r\left(\frac{t}{\ln(1+t)}-\frac{a_j t(1+t)^{a_j}}{(1+t)^{a_j}-1}\right)
=-\frac{1}{2}\left(\sum_{j=1}^r a_j\right)t+\cdots
$$
is a series with order$(\ge 1)$, we have
\begin{align*}
&\ang{\left(\partial_t\prod_{i=1}^r\left(\frac{\ln(1+t)}{(1+t)^{a_i}-1}\right)\right)(1+t)^y\Big|x^{n-1}}\\
&=\ang{\prod_{i=1}^r\left(\frac{\ln(1+t)}{(1+t)^{a_i}-1}\right)(1+t)^{y-1}\Big|\frac{\sum_{j=1}^r\left(\frac{t}{\ln(1+t)}-\frac{a_j t(1+t)^{a_j}}{(1+t)^{a_j}-1}\right)}{t}x^{n-1}}\\
&=\frac{1}{n}\sum_{j=1}^r\ang{\prod_{i=1}^r\left(\frac{\ln(1+t)}{(1+t)^{a_i}-1}\right)(1+t)^{y-1}\Big|\left(\frac{t}{\ln(1+t)}-\frac{a_j t(1+t)^{a_j}}{(1+t)^{a_j}-1}\right)x^n}\\
&=\frac{r}{n}\ang{\prod_{i=1}^r\left(\frac{\ln(1+t)}{(1+t)^{a_i}-1}\right)(1+t)^{y-1}\Big|\frac{t}{\ln(1+t)}x^n}\\
&\quad -\frac{1}{n}\sum_{j=1}^r a_j\ang{\frac{\ln(1+t)}{(1+t)^{a_j}-1}\prod_{i=1}^r\left(\frac{\ln(1+t)}{(1+t)^{a_i}-1}\right)(1+t)^{y+a_j-1}\Big|\frac{t}{\ln(1+t)}x^n}\\
&=\frac{r}{n}\ang{\prod_{i=1}^r\left(\frac{\ln(1+t)}{(1+t)^{a_i}-1}\right)(1+t)^{y-1}\Big|\sum_{l=0}^\infty c_l\frac{t^l}{l!}x^n}\\
&\quad -\frac{1}{n}\sum_{j=1}^r a_j\ang{\frac{\ln(1+t)}{(1+t)^{a_j}-1}\prod_{i=1}^r\left(\frac{\ln(1+t)}{(1+t)^{a_i}-1}\right)(1+t)^{y+a_j-1}\Big|\sum_{l=0}^\infty c_l\frac{t^l}{l!}x^n}\\
\end{align*}
\begin{align*}
&=\frac{r}{n}\sum_{l=0}^n c_l\binom{n}{l}\ang{\prod_{i=1}^r\left(\frac{\ln(1+t)}{(1+t)^{a_i}-1}\right)(1+t)^{y-1}\Big|x^{n-l}}\\
&\quad -\frac{1}{n}\sum_{j=1}^r a_j\sum_{l=0}^n c_l\binom{n}{l}\ang{\frac{\ln(1+t)}{(1+t)^{a_j}-1}\prod_{i=1}^r\left(\frac{\ln(1+t)}{(1+t)^{a_i}-1}\right)(1+t)^{y+a_j-1}\Big|x^{n-l}}\\
&=\frac{r}{n}\sum_{l=0}^n\binom{n}{l}c_l D_{n-l}(y-1|a_1,\dots,a_r)\\
&\quad -\frac{1}{n}\sum_{j=1}^r\sum_{l=0}^n \binom{n}{l}a_j c_l D_{n-l}(y+a_j-1|a_1,\dots,a_r,a_j)\,.
\end{align*}
Therefore, we obtain
\begin{align*}
D_n(x|a_1,\dots,a_r)
&=x D_{n-1}(x-1|a_1,\dots,a_r)\\
&\quad +\frac{r}{n}\sum_{l=0}^n\binom{n}{l}c_l D_{n-l}(x-1|a_1,\dots,a_r)\\
&\quad -\frac{1}{n}\sum_{j=1}^r\sum_{l=0}^n \binom{n}{l}a_j c_l D_{n-l}(x+a_j-1|a_1,\dots,a_r,a_j)\,,
\end{align*}
which is the identity (\ref{40}).

Next, for $n\ge 1$ we have
\begin{align*}
\widehat D_n(y|a_1,\dots,a_r)&=\ang{\sum_{l=0}^\infty \widehat D_l(y|a_1,\dots,a_r)\frac{t^l}{l!}\Big|x^n}\\
&=\ang{\prod_{j=1}^r\left(\frac{(1+t)^{a_j}\ln(1+t)}{(1+t)^{a_j}-1}\right)(1+t)^y\Big|x^n}\\
&=\ang{\partial_t\left(\prod_{j=1}^r\left(\frac{(1+t)^{a_j}\ln(1+t)}{(1+t)^{a_j}-1}\right)(1+t)^y\right)\Big|x^{n-1}}\\
&=\ang{\prod_{j=1}^r\left(\frac{(1+t)^{a_j}\ln(1+t)}{(1+t)^{a_j}-1}\right)\left(\partial_t(1+t)^y\right)\Big|x^{n-1}}\\
&\quad +\ang{\left(\partial_t\prod_{j=1}^r\left(\frac{(1+t)^{a_j}\ln(1+t)}{(1+t)^{a_j}-1}\right)\right)(1+t)^y\Big|x^{n-1}}\\
&=y \widehat D_{n-1}(y-1|a_1,\dots,a_r)\\
&\quad +\ang{\left(\partial_t\prod_{j=1}^r\left(\frac{(1+t)^{a_j}\ln(1+t)}{(1+t)^{a_j}-1}\right)\right)(1+t)^y\Big|x^{n-1}}\,.
\end{align*}
Observe that
\begin{align*}
&\partial_t\prod_{j=1}^r\left(\frac{(1+t)^{a_j}\ln(1+t)}{(1+t)^{a_j}-1}\right)\\
&=\partial_t\left(\prod_{j=1}^r\left(\frac{\ln(1+t)}{(1+t)^{a_j}-1}\right)\prod_{j=1}^r(1+t)^{a_j}\right)\\
&=\left(\partial_t\prod_{j=1}^r\left(\frac{\ln(1+t)}{(1+t)^{a_j}-1}\right)\right)\prod_{j=1}^r(1+t)^{a_j}\\
&\quad +\prod_{j=1}^r\left(\frac{\ln(1+t)}{(1+t)^{a_j}-1}\right)\left(\partial_t\prod_{j=1}^r(1+t)^{a_j}\right)\\
&=\frac{1}{1+t}\prod_{i=1}^r\left(\frac{(1+t)^{a_i}\ln(1+t)}{(1+t)^{a_i}-1}\right)\frac{\sum_{j=1}^r\left(\frac{t}{\ln(1+t)}-\frac{a_j t(1+t)^{a_j}}{(1+t)^{a_j}-1}\right)}{t}\\
&\quad +\frac{1}{1+t}\prod_{i=1}^r\left(\frac{(1+t)^{a_i}\ln(1+t)}{(1+t)^{a_i}-1}\right)\sum_{j=1}^r a_j\,.
\end{align*}
Thus, we have
\begin{align*}
&\ang{\left(\partial_t\prod_{i=1}^r\left(\frac{(1+t)^{a_i}\ln(1+t)}{(1+t)^{a_i}-1}\right)\right)(1+t)^y\Big|x^{n-1}}\\
&=\ang{\prod_{i=1}^r\left(\frac{(1+t)^{a_i}\ln(1+t)}{(1+t)^{a_i}-1}\right)(1+t)^{y-1}\Big|\frac{\sum_{j=1}^r\left(\frac{t}{\ln(1+t)}-\frac{a_j t(1+t)^{a_j}}{(1+t)^{a_j}-1}\right)}{t}x^{n-1}}\\
&\quad +\left(\sum_{j=1}^r a_j\right)\ang{\prod_{i=1}^r\left(\frac{(1+t)^{a_i}\ln(1+t)}{(1+t)^{a_i}-1}\right)(1+t)^{y-1}\Big|x^{n-1}}\\
&=\left(\sum_{j=1}^r a_j\right)\widehat D_{n-1}(y-1|a_1,\dots,a_r)\\
&\quad +\frac{1}{n}\ang{\prod_{i=1}^r\left(\frac{(1+t)^{a_i}\ln(1+t)}{(1+t)^{a_i}-1}\right)(1+t)^{y-1}\Big|\sum_{j=1}^r\left(\frac{t}{\ln(1+t)}-\frac{a_j t(1+t)^{a_j}}{(1+t)^{a_j}-1}\right)x^n}\\
\end{align*}
\begin{align*}
&=\left(\sum_{j=1}^r a_j\right)\widehat D_{n-1}(y-1|a_1,\dots,a_r)\\
&\quad +\frac{r}{n}\ang{\prod_{i=1}^r\left(\frac{(1+t)^{a_i}\ln(1+t)}{(1+t)^{a_i}-1}\right)(1+t)^{y-1}\Big|\frac{t}{\ln(1+t)}x^n}\\
&\quad -\frac{1}{n}\sum_{j=1}^r a_j\ang{\frac{(1+t)^{a_j}\ln(1+t)}{(1+t)^{a_j}-1}\prod_{i=1}^r\left(\frac{(1+t)^{a_i}\ln(1+t)}{(1+t)^{a_i}-1}\right)(1+t)^{y-1}\Big|\frac{t}{\ln(1+t)}x^n}\\
&=\left(\sum_{j=1}^r a_j\right)\widehat D_{n-1}(y-1|a_1,\dots,a_r)\\
&\quad +\frac{r}{n}\ang{\prod_{i=1}^r\left(\frac{(1+t)^{a_i}\ln(1+t)}{(1+t)^{a_i}-1}\right)(1+t)^{y-1}\Big|\sum_{l=0}^\infty c_l\frac{t^l}{l!}x^n}\\
&\quad -\frac{1}{n}\sum_{j=1}^r a_j\ang{\frac{(1+t)^{a_j}\ln(1+t)}{(1+t)^{a_j}-1}\prod_{i=1}^r\left(\frac{(1+t)^{a_i}\ln(1+t)}{(1+t)^{a_i}-1}\right)(1+t)^{y-1}\Big|\sum_{l=0}^\infty c_l\frac{t^l}{l!}x^n}\\
&=\left(\sum_{j=1}^r a_j\right)\widehat D_{n-1}(y-1|a_1,\dots,a_r)\\
&\quad +\frac{r}{n}\sum_{l=0}^n c_l\binom{n}{l}\ang{\prod_{i=1}^r\left(\frac{(1+t)^{a_i}\ln(1+t)}{(1+t)^{a_i}-1}\right)(1+t)^{y-1}\Big|x^{n-l}}\\
&\quad -\frac{1}{n}\sum_{j=1}^r a_j\sum_{l=0}^n c_l\binom{n}{l}\ang{\frac{(1+t)^{a_j}\ln(1+t)}{(1+t)^{a_j}-1}\prod_{i=1}^r\left(\frac{(1+t)^{a_i}\ln(1+t)}{(1+t)^{a_i}-1}\right)(1+t)^{y-1}\Big|x^{n-l}}\\
&=\left(\sum_{j=1}^r a_j\right)\widehat D_{n-1}(y-1|a_1,\dots,a_r)
+\frac{r}{n}\sum_{l=0}^n\binom{n}{l}c_l \widehat D_{n-l}(y-1|a_1,\dots,a_r)\\
&\quad -\frac{1}{n}\sum_{j=1}^r\sum_{l=0}^n \binom{n}{l}a_j c_l \widehat D_{n-l}(y-1|a_1,\dots,a_r,a_j)\,.
\end{align*}
Therefore, we obtain
\begin{align*}
\widehat D_n(x|a_1,\dots,a_r)
&=\left(x+\sum_{j=1}^r a_j\right)\widehat D_{n-1}(x-1|a_1,\dots,a_r)\\
&\quad +\frac{r}{n}\sum_{l=0}^n\binom{n}{l}c_l\widehat D_{n-l}(x-1|a_1,\dots,a_r)\\
&\quad -\frac{1}{n}\sum_{j=1}^r\sum_{l=0}^n \binom{n}{l}a_j c_l\widehat D_{n-l}(x-1|a_1,\dots,a_r,a_j)\,.
\end{align*}
which is the identity (\ref{42}).
\qed\end{proof}

%%%%%%%%%%%%%%%%%%%%%%%%%(7)%%%%%%%%%%%%%%%%%%%%%%%%%%%%%%%%%%%%

\subsection{Relations including the Stirling numbers of the first kind}

\begin{theorem}
For $n-1\ge m\ge 1$, we have
\begin{align}
&\sum_{l=0}^{n-m}\binom{n}{l}S_1(n-l,m)D_l(a_1,\dots,a_r)\notag\\
&=\sum_{l=0}^{n-m}\binom{n-1}{l}S_1(n-l-1,m-1)D_l(-1|a_1,\dots,a_r)\notag\\
&\quad +\frac{1}{n}\sum_{l=0}^{n-m-1}\binom{n}{l+1}S_1(n-l-1,m)\notag\\
&\qquad \times\left(r\sum_{i=0}^{l+1}\binom{l+1}{i}c_i D_{l+1-i}(-1|a_1,\dots,a_r)\right.\notag\\
&\qquad\quad\left. -\sum_{j=1}^r\sum_{i=0}^{l+1}\binom{l+1}{i}a_j c_i D_{l+1-i}(a_j-1|a_1,\dots,a_r,a_j)\right)\,,
\label{50a}\\
&\sum_{l=0}^{n-m}\binom{n}{l}S_1(n-l,m)\widehat D_l(a_1,\dots,a_r)\notag\\
&=\sum_{l=0}^{n-m}\binom{n-1}{l}S_1(n-l-1,m-1)\widehat D_l(-1|a_1,\dots,a_r)\notag\\
&\quad +\frac{1}{n}\sum_{l=0}^{n-m-1}\binom{n}{l+1}S_1(n-l-1,m)\notag\\
&\qquad \times\left(r\sum_{i=0}^{l+1}\binom{l+1}{i}c_i\widehat D_{l+1-i}(-1|a_1,\dots,a_r)\right.\notag\\
&\qquad\quad\left. -\sum_{j=1}^r\sum_{i=0}^{l+1}\binom{l+1}{i}a_j c_i\widehat D_{l+1-i}(-1|a_1,\dots,a_r,a_j)\right)\notag\\
&\quad +\sum_{l=0}^{n-m-1}\binom{n-1}{l}S_1(n-l-1,m)\sum_{j=1}^r a_j\widehat D_l(-1|a_1,\dots,a_r)\,.
\label{50b}
\end{align}
\label{th50}
\end{theorem}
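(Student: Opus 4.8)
The plan is to realize the left-hand side of each identity as a single umbral pairing and then evaluate that pairing a second way, using the derivative-shift $\ang{f(t)|x^n}=\ang{\partial_t f(t)|x^{n-1}}$ (valid for $n\ge 1$, since $\ang{t^k|x^n}=n!\delta_{n,k}$) exactly as in the proof of Theorem~\ref{th40}. First I would record the generating-function facts $\prod_{j=1}^r\frac{\ln(1+t)}{(1+t)^{a_j}-1}=\sum_i D_i(a_1,\dots,a_r)\frac{t^i}{i!}$, $(\ln(1+t))^m=m!\sum_k S_1(k,m)\frac{t^k}{k!}$, $(1+t)^{-1}\prod_{j=1}^r\frac{\ln(1+t)}{(1+t)^{a_j}-1}=\sum_l D_l(-1|a_1,\dots,a_r)\frac{t^l}{l!}$, and $\frac{t}{\ln(1+t)}=\sum_i c_i\frac{t^i}{i!}$. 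Writing $G(t)=\prod_{j=1}^r\frac{\ln(1+t)}{(1+t)^{a_j}-1}$ and forming the Cauchy product of $G(t)$ with $(\ln(1+t))^m$ shows, after the substitution $l\mapsto n-l$, that the left-hand side of (\ref{50a}) equals $\frac{1}{m!}\ang{G(t)(\ln(1+t))^m|x^n}$. This is the key reduction.

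Next I apply the derivative shift and the product rule: with $\Phi_m(t)=G(t)(\ln(1+t))^m$,
$$\partial_t\Phi_m(t)=m(\ln(1+t))^{m-1}\frac{1}{1+t}G(t)+(\ln(1+t))^m\,\partial_t G(t).$$
The first summand, paired against $x^{n-1}$ and divided by $m!$, is a Cauchy product of $(\ln(1+t))^{m-1}$ and $(1+t)^{-1}G(t)$, which yields precisely the first right-hand term $\sum_l\binom{n-1}{l}S_1(n-l-1,m-1)D_l(-1|a_1,\dots,a_r)$ of (\ref{50a}). For the second summand I reuse the computation of $\partial_t G(t)$ from the proof of Theorem~\ref{th40}, namely $\partial_t G(t)=\frac{1}{1+t}G(t)\frac{S(t)}{t}$ with $S(t)=\sum_{j=1}^r\bigl(\frac{t}{\ln(1+t)}-\frac{a_j t(1+t)^{a_j}}{(1+t)^{a_j}-1}\bigr)$ a series of order $\ge 1$. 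Because $S(t)$ has order $\ge 1$ I may use $\frac{S(t)}{t}x^{n-1}=\frac{1}{n}S(t)x^n$, which produces the prefactor $\frac{1}{n}$; splitting $S(t)$ into its two pieces, the $\frac{t}{\ln(1+t)}$ factor (being $j$-independent) contributes the factor $r$ together with Cauchy numbers, giving the part $r\sum_i\binom{l+1}{i}c_i D_{l+1-i}(-1|\dots)$, while the piece $\frac{t(1+t)^{a_j}}{(1+t)^{a_j}-1}$, after appending an extra factor $\frac{\ln(1+t)}{(1+t)^{a_j}-1}$ to form the $(r+1)$-parameter product with parameters $a_1,\dots,a_r,a_j$ and leaving over $(1+t)^{a_j-1}$, produces $D_{l+1-i}(a_j-1|a_1,\dots,a_r,a_j)$. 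Each of these is read off from a triple Cauchy product of $(\ln(1+t))^m$, $\frac{t}{\ln(1+t)}$, and the relevant shifted Daehee series, the substitution $k=n-l-1$ turning $\binom{n}{k}S_1(k,m)$ into $\binom{n}{l+1}S_1(n-l-1,m)$.

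The main bookkeeping obstacle is controlling the range of summation: the two pieces of $S(t)$ individually generate a spurious $l=-1$ (equivalently $k=n$) contribution, but since $S(t)$ has order $\ge 1$ these constant-term contributions cancel against one another, so the combined sum legitimately begins at $l=0$. I would verify this cancellation directly, both reducing to $\pm\frac{r}{n}S_1(n,m)\prod_{i=1}^r a_i^{-1}$ (using $D_0(-1|a_1,\dots,a_r)=\prod_i a_i^{-1}$ and $a_j D_0(a_j-1|a_1,\dots,a_r,a_j)=\prod_i a_i^{-1}$). Assembling the three contributions gives (\ref{50a}).

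Finally, the second-kind identity (\ref{50b}) is proved in the same way with $\widehat G(t)=G(t)(1+t)^{\sum_j a_j}$ in place of $G(t)$, so that $(1+t)^{-1}\widehat G(t)=\sum_l\widehat D_l(-1|a_1,\dots,a_r)\frac{t^l}{l!}$. The only new features are that $\partial_t\widehat G(t)$ carries an extra summand $\frac{1}{1+t}\widehat G(t)\sum_j a_j$ (with no $1/t$, hence no $1/n$), whose pairing against $x^{n-1}$ yields the additional term $\sum_l\binom{n-1}{l}S_1(n-l-1,m)\sum_j a_j\widehat D_l(-1|\dots)$, and that the extra $(1+t)^{a_j}$ is now absorbed into the appended second-kind factor $\frac{(1+t)^{a_j}\ln(1+t)}{(1+t)^{a_j}-1}$, leaving over $(1+t)^{-1}$; consequently the shifted argument in the last summation is $-1$ rather than $a_j-1$, exactly as in (\ref{50b}).
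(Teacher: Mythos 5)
Your proposal is correct and follows essentially the same route as the paper's own proof: both evaluate $\frac{1}{m!}\ang{\prod_{j=1}^r\bigl(\frac{\ln(1+t)}{(1+t)^{a_j}-1}\bigr)(\ln(1+t))^m\big|x^n}$ (and its hatted analogue) a second time via $\ang{f(t)|x^n}=\ang{\partial_t f(t)|x^{n-1}}$ together with the product rule and the factorization $\partial_t G(t)=\frac{1}{1+t}G(t)\frac{S(t)}{t}$ recycled from the proof of Theorem~\ref{th40}, then read off coefficients through Cauchy products exactly as you describe. The single bookkeeping difference is that the paper expands $(\ln(1+t))^m$ against $x^{n-1}$ \emph{before} performing the division by $t$, so its sums are automatically truncated at $l=n-m-1$ and no boundary term ever arises, whereas your order of operations (extracting the global $\frac{1}{n}$ first, then splitting $S(t)$) genuinely produces the spurious $l=-1$ contributions, and your cancellation check via $D_0(-1|a_1,\dots,a_r)=a_jD_0(a_j-1|a_1,\dots,a_r,a_j)=\prod_{i=1}^r a_i^{-1}$ is a correct way to dispose of them.
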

\begin{proof}
We shall compute
$$
\ang{\prod_{j=1}^r\left(\frac{\ln(1+t)}{(1+t)^{a_j}-1}\right)\bigl(\ln(1+t)\bigr)^m\Big|x^n}
$$
in two different ways.
On the one hand,
\begin{align*}
&\ang{\prod_{j=1}^r\left(\frac{\ln(1+t)}{(1+t)^{a_j}-1}\right)\bigl(\ln(1+t)\bigr)^m\Big|x^n}\\
&=\ang{\prod_{j=1}^r\left(\frac{\ln(1+t)}{(1+t)^{a_j}-1}\right)\Big|\bigl(\ln(1+t)\bigr)^m x^n}\\
&=\ang{\prod_{j=1}^r\left(\frac{\ln(1+t)}{(1+t)^{a_j}-1}\right)\Big|\sum_{l=0}^\infty\frac{m!}{(l+m)!}S_1(l+m,m)t^{l+m}x^n}\\
&=\sum_{l=0}^{n-m}\frac{m!}{(l+m)!}S_1(l+m,m)(n)_{l+m}\ang{\prod_{j=1}^r\left(\frac{\ln(1+t)}{(1+t)^{a_j}-1}\right)\Big|x^{n-l-m}}\\
&=\sum_{l=0}^{n-m}m!\binom{n}{l+m}S_1(l+m,m)D_{n-l-m}(a_1,\dots,a_r)\\
&=\sum_{l=0}^{n-m}m!\binom{n}{l}S_1(n-l,m)D_l(a_1,\dots,a_r)\,.
\end{align*}
On the other hand,
\begin{align}
&\ang{\prod_{j=1}^r\left(\frac{\ln(1+t)}{(1+t)^{a_j}-1}\right)\bigl(\ln(1+t)\bigr)^m\Big|x^n}\notag\\
&=\ang{\partial_t\left(\prod_{j=1}^r\left(\frac{\ln(1+t)}{(1+t)^{a_j}-1}\right)\bigl(\ln(1+t)\bigr)^m\right)\Big|x^{n-1}}\notag\\
&=\ang{\left(\partial_t\prod_{j=1}^r\left(\frac{\ln(1+t)}{(1+t)^{a_j}-1}\right)\right)\bigl(\ln(1+t)\bigr)^m\Big|x^{n-1}}\notag\\
&\quad +\ang{\prod_{j=1}^r\left(\frac{\ln(1+t)}{(1+t)^{a_j}-1}\right)\partial_t\left(\bigl(\ln(1+t)\bigr)^m\right)\Big|x^{n-1}}\,.
\label{53a}
\end{align}
The second term of (\ref{53a}) is equal to
\begin{align*}
&\ang{\prod_{j=1}^r\left(\frac{\ln(1+t)}{(1+t)^{a_j}-1}\right)\partial_t\left(\bigl(\ln(1+t)\bigr)^m\right)\Big|x^{n-1}}\\
&=m\ang{\prod_{j=1}^r\left(\frac{\ln(1+t)}{(1+t)^{a_j}-1}\right)(1+t)^{-1}\Big|\bigl(\ln(1+t)\bigr)^{m-1}x^{n-1}}\\
&=m\ang{\prod_{j=1}^r\left(\frac{\ln(1+t)}{(1+t)^{a_j}-1}\right)(1+t)^{-1}\Big|\sum_{l=0}^{n-m}\frac{(m-1)!}{(l+m-1)!}S_1(l+m-1,m-1)t^{l+m-1}x^{n-1}}\\
&=m\sum_{l=0}^{n-m}\frac{(m-1)!}{(l+m-1)!}S_1(l+m-1,m-1)(n-1)_{l+m-1}\\
&\quad \times\ang{\prod_{j=1}^r\left(\frac{\ln(1+t)}{(1+t)^{a_j}-1}\right)(1+t)^{-1}\Big|x^{n-l-m}}\\
&=m!\sum_{l=0}^{n-m}\binom{n-1}{l+m-1}S_1(l+m-1,m-1)D_{n-l-m}(-1|a_1,\dots,a_r)\\
&=m!\sum_{l=0}^{n-m}\binom{n-1}{l}S_1(n-l-1,m-1)D_l(-1|a_1,\dots,a_r)\,.
\end{align*}
The first term of (\ref{53a}) is equal to
\begin{align*}
&\ang{\left(\partial_t\prod_{j=1}^r\left(\frac{\ln(1+t)}{(1+t)^{a_j}-1}\right)\right)\bigl(\ln(1+t)\bigr)^m\Big|x^{n-1}}\\
&=\ang{\partial_t\prod_{j=1}^r\left(\frac{\ln(1+t)}{(1+t)^{a_j}-1}\right)\Big|\bigl(\ln(1+t)\bigr)^m x^{n-1}}\\
&=\ang{\partial_t\prod_{j=1}^r\left(\frac{\ln(1+t)}{(1+t)^{a_j}-1}\right)\Big|\sum_{l=0}^{n-m-1}\frac{m!}{(l+m)!}S_1(l+m,m)t^{l+m}x^{n-1}}\\
&=\sum_{l=0}^{n-m-1}\frac{m!}{(l+m)!}S_1(l+m,m)(n-1)_{l+m}\ang{\partial_t\prod_{j=1}^r\left(\frac{\ln(1+t)}{(1+t)^{a_j}-1}\right)\Big|x^{n-l-m-1}}\\
\end{align*}
\begin{align*}
&=\sum_{l=0}^{n-m-1}m!\binom{n-1}{l+m}S_1(l+m,m)\\
&\quad \times\ang{\prod_{i=1}^r\left(\frac{\ln(1+t)}{(1+t)^{a_i}-1}\right)(1+t)^{-1}\Big|\frac{\sum_{j=1}^r\left(\frac{t}{\ln(1+t)}-\frac{a_j t(1+t)^{a_j}}{(1+t)^{a_j}-1}\right)}{t}x^{n-l-m-1}}\\
&=m!\sum_{l=0}^{n-m-1}\frac{1}{n-l-m}\binom{n-1}{l+m}S_1(l+m,m)\\
&\quad \times\ang{\prod_{i=1}^r\left(\frac{\ln(1+t)}{(1+t)^{a_i}-1}\right)(1+t)^{-1}\Big|\sum_{j=1}^r\left(\frac{t}{\ln(1+t)}-\frac{a_j t(1+t)^{a_j}}{(1+t)^{a_j}-1}\right)x^{n-l-m}}\\
&=\frac{m!}{n}\sum_{l=0}^{n-m-1}\binom{n}{l+1}S_1(n-1-l,m)\\
&\quad \times\left(r\ang{\prod_{i=1}^r\left(\frac{\ln(1+t)}{(1+t)^{a_i}-1}\right)(1+t)^{-1}\Big|\frac{t}{\ln(1+t)}x^{l+1}}\right.\\
&\qquad\left. -\left(\sum_{j=1}^r a_j\right)\ang{\frac{\ln(1+t)}{(1+t)^{a_j}-1}(1+t)^{a_j-1}\prod_{i=1}^r\left(\frac{\ln(1+t)}{(1+t)^{a_i}-1}\right)\Big|\frac{t}{\ln(1+t)}x^{l+1}}\right)\\
&=\frac{m!}{n}\sum_{l=0}^{n-m-1}\binom{n}{l+1}S_1(n-l-1,m)\\
&\quad \times\left(r\ang{\prod_{i=1}^r\left(\frac{\ln(1+t)}{(1+t)^{a_i}-1}\right)(1+t)^{-1}\Big|\sum_{i=0}^{l+1}c_i\frac{t^i}{i!}x^{l+1}}\right.\\
&\qquad\left. -\left(\sum_{j=1}^r a_j\right)\ang{\frac{\ln(1+t)}{(1+t)^{a_j}-1}(1+t)^{a_j-1}\prod_{i=1}^r\left(\frac{\ln(1+t)}{(1+t)^{a_i}-1}\right)\Big|\sum_{i=0}^{l+1}c_i\frac{t^i}{i!}x^{l+1}}\right)\\
&=\frac{m!}{n}\sum_{l=0}^{n-m-1}\binom{n}{l+1}S_1(n-l-1,m)\\
&\quad \times\left(r\sum_{i=0}^{l+1}\binom{l+1}{i}c_i D_{l+1-i}(-1|a_1,\dots,a_r)\right.\\
&\qquad\left. -\sum_{j=1}^r a_j\sum_{i=0}^{l+1}\binom{l+1}{i}c_i D_{l+1-i}(a_j-1|a_1,\dots,a_r,a_j)\right)\,.
\end{align*}
Therefore, we have, for $n-1\ge m\ge 1$,
\begin{align*}
&m!\sum_{l=0}^{n-m}\binom{n}{l}S_1(n-l,m)D_l(a_1,\dots,a_r)\\
&=m!\sum_{l=0}^{n-m}\binom{n-1}{l}S_1(n-l-1,m-1)D_l(-1|a_1,\dots,a_r)\\
&\quad +\frac{m!}{n}\sum_{l=0}^{n-m-1}\binom{n}{l+1}S_1(n-l-1,m)\\
&\qquad \times\left(r\sum_{i=0}^{l+1}\binom{l+1}{i}c_{l+1-i}D_i(-1|a_1,\dots,a_r)\right.\\
&\qquad\quad\left. -\sum_{j=1}^r\sum_{i=0}^{l+1} a_j\binom{l+1}{i}c_{l+1-i}D_i(a_j-1|a_1,\dots,a_r,a_j)\right)\,.
\end{align*}
Thus, we get (\ref{50a}).

Next, we shall compute
$$
\ang{\prod_{j=1}^r\left(\frac{(1+t)^{a_j}\ln(1+t)}{(1+t)^{a_j}-1}\right)\bigl(\ln(1+t)\bigr)^m\Big|x^n}
$$
in two different ways.
On the one hand,
\begin{align*}
&\ang{\prod_{j=1}^r\left(\frac{(1+t)^{a_j}\ln(1+t)}{(1+t)^{a_j}-1}\right)\bigl(\ln(1+t)\bigr)^m\Big|x^n}\\
&=\ang{\prod_{j=1}^r\left(\frac{(1+t)^{a_j}\ln(1+t)}{(1+t)^{a_j}-1}\right)\Big|\bigl(\ln(1+t)\bigr)^m x^n}\\
&=\ang{\prod_{j=1}^r\left(\frac{(1+t)^{a_j}\ln(1+t)}{(1+t)^{a_j}-1}\right)\Big|\sum_{l=0}^\infty\frac{m!}{(l+m)!}S_1(l+m,m)t^{l+m}x^n}\\
&=\sum_{l=0}^{n-m}\frac{m!}{(l+m)!}S_1(l+m,m)(n)_{l+m}\ang{\prod_{j=1}^r\left(\frac{(1+t)^{a_j}\ln(1+t)}{(1+t)^{a_j}-1}\right)\Big|x^{n-l-m}}\\
&=\sum_{l=0}^{n-m}m!\binom{n}{l+m}S_1(l+m,m)\widehat D_{n-l-m}(a_1,\dots,a_r)\\
&=\sum_{l=0}^{n-m}m!\binom{n}{l}S_1(n-l,m)\widehat D_l(a_1,\dots,a_r)\,.
\end{align*}
On the other hand,
\begin{align}
&\ang{\prod_{j=1}^r\left(\frac{(1+t)^{a_j}\ln(1+t)}{(1+t)^{a_j}-1}\right)\bigl(\ln(1+t)\bigr)^m\Big|x^n}\notag\\
&=\ang{\partial_t\left(\prod_{j=1}^r\left(\frac{(1+t)^{a_j}\ln(1+t)}{(1+t)^{a_j}-1}\right)\bigl(\ln(1+t)\bigr)^m\right)\Big|x^{n-1}}\notag\\
&=\ang{\left(\partial_t\prod_{j=1}^r\left(\frac{(1+t)^{a_j}\ln(1+t)}{(1+t)^{a_j}-1}\right)\right)\bigl(\ln(1+t)\bigr)^m\Big|x^{n-1}}\notag\\
&\quad +\ang{\prod_{j=1}^r\left(\frac{(1+t)^{a_j}\ln(1+t)}{(1+t)^{a_j}-1}\right)\partial_t\left(\bigl(\ln(1+t)\bigr)^m\right)\Big|x^{n-1}}\,.
\label{53b}
\end{align}
The second term of (\ref{53b}) is equal to
\begin{align*}
&\ang{\prod_{j=1}^r\left(\frac{(1+t)^{a_j}\ln(1+t)}{(1+t)^{a_j}-1}\right)\partial_t\left(\bigl(\ln(1+t)\bigr)^m\right)\Big|x^{n-1}}\\
&=m\ang{\prod_{j=1}^r\left(\frac{(1+t)^{a_j}\ln(1+t)}{(1+t)^{a_j}-1}\right)(1+t)^{-1}\Big|\bigl(\ln(1+t)\bigr)^{m-1}x^{n-1}}\\
&=m\ang{\prod_{j=1}^r\left(\frac{(1+t)^{a_j}\ln(1+t)}{(1+t)^{a_j}-1}\right)(1+t)^{-1}\Big|\sum_{l=0}^{n-m}\frac{(m-1)!}{(l+m-1)!}S_1(l+m-1,m-1)t^{l+m-1}x^{n-1}}\\
&=m\sum_{l=0}^{n-m}\frac{(m-1)!}{(l+m-1)!}S_1(l+m-1,m-1)(n-1)_{l+m-1}\\
&\quad \times\ang{\prod_{j=1}^r\left(\frac{(1+t)^{a_j}\ln(1+t)}{(1+t)^{a_j}-1}\right)(1+t)^{-1}\Big|x^{n-l-m}}\\
&=m!\sum_{l=0}^{n-m}\binom{n-1}{l+m-1}S_1(l+m-1,m-1)\widehat D_{n-l-m}(-1|a_1,\dots,a_r)\\
&=m!\sum_{l=0}^{n-m}\binom{n-1}{l}S_1(n-l-1,m-1)\widehat D_l(-1|a_1,\dots,a_r)\,.
\end{align*}
The first term of (\ref{53b}) is equal to
\begin{align*}
&\ang{\left(\partial_t\prod_{j=1}^r\left(\frac{(1+t)^{a_j}\ln(1+t)}{(1+t)^{a_j}-1}\right)\right)\bigl(\ln(1+t)\bigr)^m\Big|x^{n-1}}\\
&=\ang{\partial_t\prod_{j=1}^r\left(\frac{(1+t)^{a_j}\ln(1+t)}{(1+t)^{a_j}-1}\right)\Big|\bigl(\ln(1+t)\bigr)^m x^{n-1}}\\
&=\ang{\partial_t\prod_{j=1}^r\left(\frac{(1+t)^{a_j}\ln(1+t)}{(1+t)^{a_j}-1}\right)\Big|\sum_{l=0}^{n-m-1}\frac{m!}{(l+m)!}S_1(l+m,m)t^{l+m}x^{n-1}}\\
&=\sum_{l=0}^{n-m-1}\frac{m!}{(l+m)!}S_1(l+m,m)(n-1)_{l+m}\ang{\partial_t\prod_{j=1}^r\left(\frac{(1+t)^{a_j}\ln(1+t)}{(1+t)^{a_j}-1}\right)\Big|x^{n-l-m-1}}\,.
\end{align*}
From the proof of (\ref{42}), we recall
\begin{align*}
\partial_t\prod_{j=1}^r\left(\frac{(1+t)^{a_j}\ln(1+t)}{(1+t)^{a_j}-1}\right)
&=\frac{1}{1+t}\prod_{i=1}^r\left(\frac{(1+t)^{a_i}\ln(1+t)}{(1+t)^{a_i}-1}\right)\frac{\sum_{j=1}^r\left(\frac{t}{\ln(1+t)}-\frac{a_j t(1+t)^{a_j}}{(1+t)^{a_j}-1}\right)}{t}\\
&\quad +\frac{1}{1+t}\prod_{i=1}^r\left(\frac{(1+t)^{a_i}\ln(1+t)}{(1+t)^{a_i}-1}\right)\sum_{j=1}^r a_j\,.
\end{align*}
Hence, the first term of (\ref{53a}) is equal to
\begin{align*}
&\sum_{l=0}^{n-m-1}m!\binom{n-1}{l+m}S_1(l+m,m)\\
&\quad \times\left(\ang{\prod_{i=1}^r\left(\frac{(1+t)^{a_i}\ln(1+t)}{(1+t)^{a_i}-1}\right)(1+t)^{-1}\Big|\frac{\sum_{j=1}^r\left(\frac{t}{\ln(1+t)}-\frac{a_j t(1+t)^{a_j}}{(1+t)^{a_j}-1}\right)}{t}x^{n-l-m-1}}\right.\\
&\qquad\left. +\left(\sum_{j=1}^r a_j\right)\ang{\prod_{i=1}^r\left(\frac{(1+t)^{a_i}\ln(1+t)}{(1+t)^{a_i}-1}\right)(1+t)^{-1}\Big|x^{n-l-m-1}}\right)\\
&=m!\sum_{l=0}^{n-m-1}\binom{n-1}{l}S_1(n-l-1,m)\\
&\quad \times\left(\ang{\prod_{i=1}^r\left(\frac{(1+t)^{a_i}\ln(1+t)}{(1+t)^{a_i}-1}\right)(1+t)^{-1}\Big|\frac{\sum_{j=1}^r\left(\frac{t}{\ln(1+t)}-\frac{a_j t(1+t)^{a_j}}{(1+t)^{a_j}-1}\right)}{t}x^l}\right.\\
&\qquad\left. +\left(\sum_{j=1}^r a_j\right)\ang{\prod_{i=1}^r\left(\frac{(1+t)^{a_i}\ln(1+t)}{(1+t)^{a_i}-1}\right)(1+t)^{-1}\Big|x^l}\right)\\
&=m!\sum_{l=0}^{n-m-1}\binom{n-1}{l}S_1(n-l-1,m)\\
&\quad \times\left(\frac{1}{l+1}\ang{\prod_{i=1}^r\left(\frac{(1+t)^{a_i}\ln(1+t)}{(1+t)^{a_i}-1}\right)(1+t)^{-1}\Big|\sum_{j=1}^r\left(\frac{t}{\ln(1+t)}-\frac{a_j t(1+t)^{a_j}}{(1+t)^{a_j}-1}\right)x^{l+1}}\right.\\
&\qquad\left. +\left(\sum_{j=1}^r a_j\right)\ang{\prod_{i=1}^r\left(\frac{(1+t)^{a_i}\ln(1+t)}{(1+t)^{a_i}-1}\right)(1+t)^{-1}\Big|x^l}\right)\\
&=m!\sum_{l=0}^{n-m-1}\binom{n-1}{l}S_1(n-l-1,m)\\
&\quad \times\left(\frac{r}{l+1}\ang{\prod_{i=1}^r\left(\frac{(1+t)^{a_i}\ln(1+t)}{(1+t)^{a_i}-1}\right)(1+t)^{-1}\Big|\frac{t}{\ln(1+t)}x^{l+1}}\right.\\
&\qquad -\frac{1}{l+1}\left(\sum_{j=1}^r a_j\right)\ang{\frac{(1+t)^{a_j}\ln(1+t)}{(1+t)^{a_j}-1}\prod_{i=1}^r\left(\frac{(1+t)^{a_i}\ln(1+t)}{(1+t)^{a_i}-1}\right)(1+t)^{-1}\Big|\frac{t}{\ln(1+t)}x^{l+1}}\\
&\qquad\left. +\left(\sum_{j=1}^r a_j\right)\ang{\prod_{i=1}^r\left(\frac{(1+t)^{a_i}\ln(1+t)}{(1+t)^{a_i}-1}\right)(1+t)^{-1}\Big|x^l}\right)\\
\end{align*}
\begin{align*}
&=m!\sum_{l=0}^{n-m-1}\binom{n-1}{l}S_1(n-l-1,m)\\
&\quad \times\left(\frac{r}{l+1}\ang{\prod_{i=1}^r\left(\frac{(1+t)^{a_i}\ln(1+t)}{(1+t)^{a_i}-1}\right)(1+t)^{-1}\Big|\sum_{i=0}^{l+1}c_i\frac{t^i}{i!}x^{l+1}}\right.\\
&\qquad -\frac{1}{l+1}\left(\sum_{j=1}^r a_j\right)\ang{\frac{(1+t)^{a_j}\ln(1+t)}{(1+t)^{a_j}-1}\prod_{i=1}^r\left(\frac{(1+t)^{a_i}\ln(1+t)}{(1+t)^{a_i}-1}\right)(1+t)^{-1}\Big|\sum_{i=0}^{l+1}c_i\frac{t^i}{i!}x^{l+1}}\\
&\qquad\left. +\left(\sum_{j=1}^r a_j\right)\ang{\prod_{i=1}^r\left(\frac{(1+t)^{a_i}\ln(1+t)}{(1+t)^{a_i}-1}\right)(1+t)^{-1}\Big|x^l}\right)\\
&=m!\sum_{l=0}^{n-m-1}\binom{n-1}{l}S_1(n-l-1,m)\\
&\quad \times\left(\frac{r}{l+1}\sum_{i=0}^{l+1}\binom{l+1}{i}c_i\widehat D_{l+1-i}(-1|a_1,\dots,a_r)\right.\\
&\qquad\left. -\frac{1}{l+1}\sum_{j=1}^r a_j\sum_{i=0}^{l+1}\binom{l+1}{i}c_i\widehat D_{l+1-i}(-1|a_1,\dots,a_r,a_j)
+\sum_{j=1}^r a_j\widehat D_l(-1|a_1,\dots,a_r)\right)\\
&=\frac{m!}{n}\sum_{l=0}^{n-m-1}\binom{n}{l+1}S_1(n-l-1,m)\\
&\quad \times\left(r\sum_{i=0}^{l+1}\binom{l+1}{i}c_i\widehat D_{l+1-i}(-1|a_1,\dots,a_r)\right.\\
&\qquad\left. -\sum_{j=1}^r \sum_{i=0}^{l+1}\binom{l+1}{i}a_j c_i\widehat D_{l+1-i}(-1|a_1,\dots,a_r,a_j)\right)\\
&\quad +m!\sum_{l=0}^{n-m-1}\binom{n-1}{l}S_1(n-l-1,m)\sum_{j=1}^r a_j\widehat D_l(-1|a_1,\dots,a_r)\,.
\end{align*}
Therefore, we get (\ref{50b}).
\qed\end{proof}

%%%%%%%%%%%%%%%%%%%%%%%%%(8)%%%%%%%%%%%%%%%%%%%%%%%%%%%%%%%%%%%%

\subsection{Relations with the falling factorials}

\begin{theorem}
\begin{align}
D_n(x|a_1,\dots,a_r)=\sum_{m=0}^n\binom{n}{m}D_{n-m}(a_1,\dots,a_r)(x)_m\,,
\label{60}\\
\widehat D_n(x|a_1,\dots,a_r)=\sum_{m=0}^n\binom{n}{m}\widehat D_{n-m}(a_1,\dots,a_r)(x)_m\,.
\label{61}\
\end{align}
\label{th60}
\end{theorem}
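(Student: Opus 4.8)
The plan is to observe first that the two identities (\ref{60}) and (\ref{61}) are exactly the expansions (\ref{100c}) and (\ref{100cdash}) already recorded in Theorem \ref{th100}; a self-contained derivation, however, follows most transparently from the transition formula (\ref{uc19}) between two Sheffer sequences that share the same delta series. Indeed, by (\ref{10b}) we have $D_n(x|a_1,\dots,a_r)\sim\bigl(g(t),e^t-1\bigr)$ with $g(t)=\prod_{j=1}^r\bigl((e^{a_jt}-1)/t\bigr)$, while by (\ref{51}) the falling factorials satisfy $(x)_n\sim(1,e^t-1)$. Since both are Sheffer for the common delta series $f(t)=e^t-1$, I would write $D_n(x|a_1,\dots,a_r)=\sum_{m=0}^n C_{n,m}(x)_m$ and read off the coefficients $C_{n,m}$ from (\ref{uc19}) with $r_n(x)=(x)_n$, $h(t)=1$, $l(t)=f(t)$.

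Next I would carry out the coefficient computation. Because $l=f$, the compositional inverse $\bar f(t)=\ln(1+t)$ gives $l(\bar f(t))=f(\bar f(t))=t$, so $C_{n,m}=\frac{1}{m!}\ang{g(\bar f(t))^{-1}t^m|x^n}$. Applying (\ref{uc11}) together with the derivative action $t^m x^n=(n)_m x^{n-m}$ from (\ref{uc13}), this collapses to $C_{n,m}=\binom{n}{m}\ang{g(\bar f(t))^{-1}|x^{n-m}}$. The crucial recognition is that $g(\bar f(t))^{-1}=\prod_{j=1}^r\bigl(\ln(1+t)/((1+t)^{a_j}-1)\bigr)$ is precisely the generating function of the Barnes-type Daehee numbers $D_i(a_1,\dots,a_r)$, namely (\ref{barnesdaehee1}) at $x=0$; hence $\ang{g(\bar f(t))^{-1}|x^{n-m}}=D_{n-m}(a_1,\dots,a_r)$ and $C_{n,m}=\binom{n}{m}D_{n-m}(a_1,\dots,a_r)$, which is exactly (\ref{60}).

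The second-kind case (\ref{61}) is identical once $g$ is replaced by the weight $\widehat g(t)=\prod_{j=1}^r\bigl((e^{a_jt}-1)/(t e^{a_jt})\bigr)$ coming from (\ref{11b}). The delta series is again $e^t-1$, so $\bar f(t)=\ln(1+t)$, and $\widehat g(\bar f(t))^{-1}=\prod_{j=1}^r\bigl((1+t)^{a_j}\ln(1+t)/((1+t)^{a_j}-1)\bigr)$ is the generating function of $\widehat D_i(a_1,\dots,a_r)$ by (\ref{barnesdaehee2}) at $x=0$. The same evaluation then yields $C_{n,m}=\binom{n}{m}\widehat D_{n-m}(a_1,\dots,a_r)$ and hence (\ref{61}).

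As a fallback, one may instead reprove both identities verbatim as in Theorem \ref{th100}: write $D_n(y|a_1,\dots,a_r)=\ang{\sum_i D_i(y|a_1,\dots,a_r)t^i/i!|x^n}$, insert the generating function (\ref{barnesdaehee1}), expand $(1+t)^y=\sum_m (y)_m t^m/m!$, and distribute the functional across the product using (\ref{uc11}). The main obstacle is essentially bookkeeping rather than conceptual: one must correctly use $l(\bar f(t))^m=t^m$ and reduce the pairing $\ang{\,\cdot\,|t^m x^n}$ through the derivative action so that the residual evaluation against $x^{n-m}$ reproduces the Daehee number $D_{n-m}(a_1,\dots,a_r)$. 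No new estimate or structural input beyond the umbral identities (\ref{uc11})--(\ref{uc13}), (\ref{uc19}) and the generating functions (\ref{barnesdaehee1})--(\ref{barnesdaehee2}) is required.
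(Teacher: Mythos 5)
Your proposal is correct and follows essentially the same route as the paper: both apply the connection-constants formula (\ref{uc19}) to the pair (\ref{10b}) (resp.\ (\ref{11b})) and $(x)_n\sim(1,e^t-1)$ from (\ref{51}), use $\bar f(t)=\ln(1+t)$ and $l(\bar f(t))=t$ to reduce $C_{n,m}$ via (\ref{uc11}) and (\ref{uc13}) to $\binom{n}{m}\ang{g(\bar f(t))^{-1}\big|x^{n-m}}$, and recognize this as $\binom{n}{m}D_{n-m}(a_1,\dots,a_r)$, resp.\ $\binom{n}{m}\widehat D_{n-m}(a_1,\dots,a_r)$, from the generating functions (\ref{barnesdaehee1}) and (\ref{barnesdaehee2}) at $x=0$. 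Your fallback observation that (\ref{60}) and (\ref{61}) already appear as (\ref{100c}) and (\ref{100cdash}) in Theorem \ref{th100} is also accurate, and incidentally you cite (\ref{11b}) for the second kind where the paper's proof mistakenly refers to (\ref{10b}) again.
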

\begin{proof}
For (\ref{10b}) and (\ref{51}), assume that
$D_n(x|a_1,\dots,a_r)=\sum_{m=0}^n C_{n,m}(x)_m$. By (\ref{uc19}), we have
\begin{align*}
C_{n,m}&=\frac{1}{m!}\ang{\frac{1}{\prod_{j=1}^r\left(\frac{e^{a_j\ln(1+t)}-1}{\ln(1+t)}\right)}t^m\Big|x^n}\\
&=\frac{1}{m!}\ang{\prod_{j=1}^r\left(\frac{\ln(1+t)}{(1+t)^{a_j}-1}\right)\Big|t^m x^n}\\
&=\binom{n}{m}\ang{\prod_{j=1}^r\left(\frac{\ln(1+t)}{(1+t)^{a_j}-1}\right)\Big|x^{n-m}}\\
&=\binom{n}{m}D_{n-m}(a_1,\dots,a_r)\,.
\end{align*}
Thus, we get the identity (\ref{60}).

Similarly, for (\ref{10b}) and (\ref{51}), assume that
$\widehat D_n(x|a_1,\dots,a_r)=\sum_{m=0}^n C_{n,m}(x)_m$. By (\ref{uc19}), we have
\begin{align*}
C_{n,m}&=\frac{1}{m!}\ang{\frac{1}{\prod_{j=1}^r\left(\frac{e^{a_j\ln(1+t)}-1}{e^{a_j\ln(1+t)}\ln(1+t)}\right)}t^m\Big|x^n}\\
&=\frac{1}{m!}\ang{\prod_{j=1}^r\left(\frac{(1+t)^{a_j}\ln(1+t)}{(1+t)^{a_j}-1}\right)\Big|t^m x^n}\\
&=\binom{n}{m}\ang{\prod_{j=1}^r\left(\frac{(1+t)^{a_j}\ln(1+t)}{(1+t)^{a_j}-1}\right)\Big|x^{n-m}}\\
&=\binom{n}{m}\widehat D_{n-m}(a_1,\dots,a_r)\,.
\end{align*}
Thus, we get the identity (\ref{61}).
\qed\end{proof}

%%%%%%%%%%%%%%%%%%%%%%%%%(9)%%%%%%%%%%%%%%%%%%%%%%%%%%%%%%%%%%%%

\subsection{Relations with higher-order Frobenius-Euler polynomials}

For $\lambda\in\mathbb C$ with $\lambda\ne 1$, the Frobenius-Euler polynomials of order $r$, $H_{n}^{(r)}(x|\lambda)$ are defined by the generating function
$$
\left(\frac{1-\lambda}{e^t-\lambda}\right)^r e^{xt}=\sum_{n=0}^\infty H_{n}^{(r)}(x|\lambda)\frac{t^n}{n!}
$$
(see e.g. \cite{KimKim1, KKL}).

\begin{theorem}
\begin{align}
&D_n(x|a_1,\dots,a_r)
=\sum_{m=0}^n\left(\sum_{j=0}^{n-m}\sum_{l=0}^{n-m-j}\binom{s}{j}\binom{n-j}{l}(n)_j\right.\notag\\
&\qquad\qquad\qquad\left. \times(1-\lambda)^{-j}S_1(n-j-l,m)D_l(a_1,\dots,a_r)\right)H_m^{(s)}(x|\lambda)\,,
\label{80}\\
&\widehat D_n(x|a_1,\dots,a_r)
=\sum_{m=0}^n\left(\sum_{j=0}^{n-m}\sum_{l=0}^{n-m-j}\binom{s}{j}\binom{n-j}{l}(n)_j\right.\notag\\
&\qquad\qquad\qquad\left. \times(1-\lambda)^{-j}S_1(n-j-l,m)\widehat D_l(a_1,\dots,a_r)\right)H_m^{(s)}(x|\lambda)\,.
\label{81}
\end{align}
\label{th80}
\end{theorem}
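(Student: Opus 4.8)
The plan is to read this off formula (\ref{uc19}) as a connection-coefficient computation between two Sheffer sequences. First I would record the Sheffer data of the target basis. Comparing the generating function of $H_m^{(s)}(x|\lambda)$ against Lemma \ref{th234}, the exponential factor forces $\bar l(t)=t$, hence $l(t)=t$, while the prefactor gives $1/h(t)=\bigl((1-\lambda)/(e^t-\lambda)\bigr)^s$, so that
$$
H_m^{(s)}(x|\lambda)\sim\left(\left(\frac{e^t-\lambda}{1-\lambda}\right)^s,\ t\right)\,.
$$
On the other side, (\ref{10b}) gives $D_n(x|a_1,\dots,a_r)\sim(g(t),e^t-1)$ with $g(t)=\prod_{j=1}^r\bigl((e^{a_jt}-1)/t\bigr)$ and compositional inverse $\bar f(t)=\ln(1+t)$.

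Writing $D_n(x|a_1,\dots,a_r)=\sum_{m=0}^n C_{n,m}H_m^{(s)}(x|\lambda)$ and applying (\ref{uc19}) with $h(\bar f(t))=\bigl((1+t-\lambda)/(1-\lambda)\bigr)^s$, with $1/g(\bar f(t))=\prod_j\bigl(\ln(1+t)/((1+t)^{a_j}-1)\bigr)$, and with $l(\bar f(t))^m=(\ln(1+t))^m$, I obtain
$$
C_{n,m}=\frac{1}{m!}\left\langle\left(\frac{1+t-\lambda}{1-\lambda}\right)^s\prod_{j=1}^r\left(\frac{\ln(1+t)}{(1+t)^{a_j}-1}\right)\bigl(\ln(1+t)\bigr)^m\Bigg|x^n\right\rangle\,.
$$
The next step is the binomial expansion $\bigl((1+t-\lambda)/(1-\lambda)\bigr)^s=\bigl(1+t/(1-\lambda)\bigr)^s=\sum_{j\ge0}\binom{s}{j}(1-\lambda)^{-j}t^j$, after which I move each $t^j$ onto $x^n$ using (\ref{uc11}) and (\ref{uc13}) via $t^j x^n=(n)_j x^{n-j}$. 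This peels off the factor $\binom{s}{j}(1-\lambda)^{-j}(n)_j$ and leaves, for each $j$, the pairing $\left\langle\prod_{i=1}^r\bigl(\ln(1+t)/((1+t)^{a_i}-1)\bigr)\bigl(\ln(1+t)\bigr)^m\big|x^{n-j}\right\rangle$.

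The final step is to recognize this bracket as exactly the quantity computed ``on the one hand'' in the proof of Theorem \ref{th50}: expanding $\bigl(\ln(1+t)\bigr)^m=m!\sum_l S_1(l+m,m)t^{l+m}/(l+m)!$ and pairing against $x^{N}$ gives
$$
\left\langle\prod_{i=1}^r\left(\frac{\ln(1+t)}{(1+t)^{a_i}-1}\right)\bigl(\ln(1+t)\bigr)^m\Bigg|x^{N}\right\rangle=m!\sum_{l=0}^{N-m}\binom{N}{l}S_1(N-l,m)D_l(a_1,\dots,a_r)\,.
$$
Substituting $N=n-j$, the $m!$ cancels the $1/m!$ in $C_{n,m}$ and assembles precisely the triple sum in (\ref{80}); the effective upper limit $j\le n-m$ is automatic since $S_1(n-j-l,m)$ vanishes unless $n-j-l\ge m$. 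For the second-kind identity (\ref{81}) I would run the identical argument starting from (\ref{11b}), so that $1/g(\bar f(t))=\prod_j\bigl((1+t)^{a_j}\ln(1+t)/((1+t)^{a_j}-1)\bigr)$, and invoke instead the ``on the one hand'' evaluation from the proof of (\ref{50b}), which simply replaces each $D_l$ by $\widehat D_l$. The only point requiring care is the bookkeeping of the Frobenius--Euler expansion together with the index shift $n\mapsto n-j$; no idea beyond (\ref{uc19}) and the Theorem \ref{th50} evaluation is needed, so I expect no serious obstacle.
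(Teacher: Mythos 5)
Your proposal is correct and follows essentially the same route as the paper's own proof: both identify $H_m^{(s)}(x|\lambda)\sim\bigl(\bigl(\frac{e^t-\lambda}{1-\lambda}\bigr)^s,t\bigr)$, apply the connection-coefficient formula (\ref{uc19}), expand the Frobenius--Euler factor binomially in $t$, move each $t^j$ onto $x^n$ to produce $(n)_j\,(1-\lambda)^{-j}$, and then evaluate the remaining bracket exactly as in the ``on the one hand'' computation from the proof of Theorem \ref{th50}, with $D_l$ replaced by $\widehat D_l$ for (\ref{81}). The only difference is cosmetic: you expand $\bigl(1+\frac{t}{1-\lambda}\bigr)^s$ directly, while the paper writes $(1-\lambda+t)^s$ with coefficients $\binom{s}{i}(1-\lambda)^{s-i}$ and cancels the prefactor $(1-\lambda)^{-s}$ afterwards.
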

\begin{proof}
For (\ref{10b}) and
\begin{equation}
H_n^{(s)}(x|\lambda)\sim\left(\left(\frac{e^t-\lambda}{1-\lambda}\right)^s, t\right)\,,
\label{86}
\end{equation}
assume that
$D_n(x|a_1,\dots,a_r)=\sum_{m=0}^n C_{n,m}H_m^{(s)}(x|\lambda)$. By (\ref{uc19}), similarly to the proof of (\ref{50a}), we have
\begin{align*}
C_{n,m}&=\frac{1}{m!}\ang{\frac{\left(\frac{e^{\ln(1+t)}-\lambda}{1-\lambda}\right)^s}{\prod_{j=1}^r\left(\frac{e^{a_j\ln(1+t)}-1}{\ln(1+t)}\right)}\bigl(\ln(1+t)\bigr)^m\Big|x^n}\\
&=\frac{1}{m!(1-\lambda)^s}\ang{\prod_{j=1}^r\left(\frac{\ln(1+t)}{(1+t)^{a_j}-1}\right)\bigl(\ln(1+t)\bigr)^m(1-\lambda+t)^s\Big|x^n}\\
&=\frac{1}{m!(1-\lambda)^s}\ang{\prod_{j=1}^r\left(\frac{\ln(1+t)}{(1+t)^{a_j}-1}\right)\bigl(\ln(1+t)\bigr)^m\Big|\sum_{i=0}^{\min\{s,n\}}\binom{s}{i}(1-\lambda)^{s-i}t^i x^n}\\
&=\frac{1}{m!(1-\lambda)^s}\sum_{i=0}^{n-m}\binom{s}{i}(1-\lambda)^{s-i}(n)_i\ang{\prod_{j=1}^r\left(\frac{\ln(1+t)}{(1+t)^{a_j}-1}\right)\Big|\bigl(\ln(1+t)\bigr)^m x^{n-i}}\\
&=\frac{1}{m!(1-\lambda)^s}\sum_{i=0}^{n-m}\binom{s}{i}(1-\lambda)^{s-i}(n)_i\sum_{l=0}^{n-m-i}m!\binom{n-i}{l}S_1(n-i-l,m)D_l(a_1,\dots,a_r)\\
&=\sum_{i=0}^{n-m}\sum_{l=0}^{n-m-i}\binom{s}{i}\binom{n-i}{l}(n)_i(1-\lambda)^{-i}S_1(n-i-l,m)D_l(a_1,\dots,a_r)\,.
\end{align*}
Thus, we get the identity (\ref{80}).

Next, for (\ref{11b}) and (\ref{86}),
assume that
$\widehat D_n(x|a_1,\dots,a_r)=\sum_{m=0}^n C_{n,m}H_m^{(s)}(x|\lambda)$. By (\ref{uc19}), similarly to the proof of (\ref{50b}), we have
\begin{align*}
C_{n,m}&=\frac{1}{m!}\ang{\frac{\left(\frac{e^{\ln(1+t)}-\lambda}{1-\lambda}\right)^s}{\prod_{j=1}^r\left(\frac{e^{a_j\ln(1+t)}-1}{e^{a_j\ln(1+t)}\ln(1+t)}\right)}\bigl(\ln(1+t)\bigr)^m\Big|x^n}\\
&=\frac{1}{m!(1-\lambda)^s}\ang{\prod_{j=1}^r\left(\frac{(1+t)^{a_j}\ln(1+t)}{(1+t)^{a_j}-1}\right)\bigl(\ln(1+t)\bigr)^m\Big|(1-\lambda+t)^s x^n}\\
&=\frac{1}{m!(1-\lambda)^s}\ang{\prod_{j=1}^r\left(\frac{(1+t)^{a_j}\ln(1+t)}{(1+t)^{a_j}-1}\right)\bigl(\ln(1+t)\bigr)^m\Big|\sum_{i=0}^{\min\{s,n\}}\binom{s}{i}(1-\lambda)^{s-i}t^i x^n}\\
&=\frac{1}{m!(1-\lambda)^s}\sum_{i=0}^{n-m}\binom{s}{i}(1-\lambda)^{s-i}(n)_i\ang{\prod_{j=1}^r\left(\frac{(1+t)^{a_j}\ln(1+t)}{(1+t)^{a_j}-1}\right)\Big|\bigl(\ln(1+t)\bigr)^m x^{n-i}}\\
&=\frac{1}{m!(1-\lambda)^s}\sum_{i=0}^{n-m}\binom{s}{i}(1-\lambda)^{s-i}(n)_i\sum_{l=0}^{n-m-i}m!\binom{n-i}{l}S_1(n-i-l,m)\widehat D_l(a_1,\dots,a_r)\\
&=\sum_{i=0}^{n-m}\sum_{l=0}^{n-m-i}\binom{s}{i}\binom{n-i}{l}(n)_i(1-\lambda)^{-i}S_1(n-i-l,m)\widehat D_l(a_1,\dots,a_r)\,.
\end{align*}
Thus, we get the identity (\ref{81}).
\qed\end{proof}

%%%%%%%%%%%%%%%%%%%%%%%%%(10)%%%%%%%%%%%%%%%%%%%%%%%%%%%%%%%%%%%%

\subsection{Relations with higher-order Bernoulli polynomials}

Bernoulli polynomials $\mathfrak B_n^{(r)}(x)$ of order $r$ are defined by
$$
\left(\frac{t}{e^t-1}\right)^r e^{xt}=\sum_{n=0}^\infty\frac{\mathfrak B_n^{(r)}(x)}{n!}t^n
$$
(see e.g. \cite[Section 2.2]{Roman}).
In addition, Cauchy numbers of the first kind $\mathfrak C_n^{(r)}$ of order $r$ are defined by
$$
\left(\frac{t}{\ln(1+t)}\right)^r=\sum_{n=0}^\infty\frac{\mathfrak C_n^{(r)}}{n!}t^n
$$
(see e.g. \cite[(2.1)]{Car}, \cite[(6)]{LW}).

\begin{theorem}
\begin{align}
&D_n(x|a_1,\dots,a_r)\notag\\
&=\sum_{m=0}^n\left(\sum_{i=0}^{n-m}\sum_{l=0}^{n-m-i}\binom{n}{i}\binom{n-i}{l}\mathfrak C_i^{(s)}S_1(n-i-l,m)D_l(a_1,\dots,a_r)\right)\mathfrak B_m^{(s)}(x)\,,
\label{90}\\
&\widehat D_n(x|a_1,\dots,a_r)\notag\\
&=\sum_{m=0}^n\left(\sum_{i=0}^{n-m}\sum_{l=0}^{n-m-i}\binom{n}{i}\binom{n-i}{l}\mathfrak C_i^{(s)}S_1(n-i-l,m)\widehat D_l(a_1,\dots,a_r)\right)\mathfrak B_m^{(s)}(x)\,.
\label{91}
\end{align}
\label{th90}
\end{theorem}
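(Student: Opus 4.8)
The plan is to apply the connection-coefficient formula (\ref{uc19}) in exactly the same spirit as in the proofs of (\ref{80}) and (\ref{50a}), now expanding $D_n(x|a_1,\dots,a_r)$ in the basis of higher-order Bernoulli polynomials. From the generating function of $\mathfrak B_m^{(s)}(x)$ and Lemma \ref{th234} one reads off $\mathfrak B_m^{(s)}(x)\sim\bigl((\frac{e^t-1}{t})^s,t\bigr)$. Writing $D_n(x|a_1,\dots,a_r)=\sum_{m=0}^n C_{n,m}\mathfrak B_m^{(s)}(x)$ and invoking (\ref{uc19}) with (\ref{10b}), so that $\bar f(t)=\ln(1+t)$, $h(t)=(\frac{e^t-1}{t})^s$ and $l(t)=t$, I would obtain
$$
C_{n,m}=\frac{1}{m!}\ang{\left(\frac{t}{\ln(1+t)}\right)^s\prod_{j=1}^r\left(\frac{\ln(1+t)}{(1+t)^{a_j}-1}\right)\bigl(\ln(1+t)\bigr)^m\Big|x^n}\,,
$$
the crucial simplification being that $h(\bar f(t))=\bigl((e^{\ln(1+t)}-1)/\ln(1+t)\bigr)^s=(t/\ln(1+t))^s$, precisely the generating function of the order-$s$ Cauchy numbers $\mathfrak C_i^{(s)}$.

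Next I would expand $(t/\ln(1+t))^s=\sum_{i=0}^\infty\frac{\mathfrak C_i^{(s)}}{i!}t^i$, transfer each factor $t^i$ onto $x^n$ via (\ref{uc11}) using $t^i x^n=(n)_i x^{n-i}$, and rewrite $(n)_i/i!=\binom{n}{i}$. This reduces $C_{n,m}$ to
$$
C_{n,m}=\frac{1}{m!}\sum_{i=0}^{n-m}\binom{n}{i}\mathfrak C_i^{(s)}\ang{\prod_{j=1}^r\left(\frac{\ln(1+t)}{(1+t)^{a_j}-1}\right)\bigl(\ln(1+t)\bigr)^m\Big|x^{n-i}}\,,
$$
where the cutoff $i\le n-m$ arises because the operator inside the bracket has order $m$, so it annihilates $x^{n-i}$ once $n-i<m$.

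I would then quote the ``on the one hand'' computation already carried out in the proof of Theorem \ref{th50}, namely
$$
\ang{\prod_{j=1}^r\left(\frac{\ln(1+t)}{(1+t)^{a_j}-1}\right)\bigl(\ln(1+t)\bigr)^m\Big|x^N}=\sum_{l=0}^{N-m}m!\binom{N}{l}S_1(N-l,m)D_l(a_1,\dots,a_r)\,,
$$
applied with $N=n-i$. Substituting this, cancelling the factor $m!$, and merging the two summations gives exactly the triple sum in (\ref{90}). The derivation of (\ref{91}) is word-for-word the same, starting from (\ref{11b}) so that $g(\bar f(t))=\prod_{j=1}^r\bigl((1+t)^{a_j}-1\bigr)/\bigl((1+t)^{a_j}\ln(1+t)\bigr)$; this merely replaces the inner first-kind pairing by its second-kind analogue established in the proof of (\ref{50b}), producing $\widehat D_l(a_1,\dots,a_r)$ in place of $D_l(a_1,\dots,a_r)$.

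The only genuine difficulty is bookkeeping rather than analysis: one must verify the collapse $h(\bar f(t))=(t/\ln(1+t))^s$ to the Cauchy generating function, and then keep the three indices $i,l,m$ and their ranges consistent so that the double sum over $i$ and $l$ assembles into the stated form with binomials $\binom{n}{i}\binom{n-i}{l}$ and Stirling numbers $S_1(n-i-l,m)$. No new input beyond the umbral identities (\ref{uc11}), (\ref{uc19}) and the inner pairing of Theorem \ref{th50} is required.
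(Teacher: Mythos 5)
Your proposal is correct and takes essentially the same route as the paper's own proof: both apply (\ref{uc19}) with (\ref{10b}) (resp.\ (\ref{11b})) and (\ref{96}), identify $h\bigl(\bar f(t)\bigr)=\bigl(t/\ln(1+t)\bigr)^s$ with the generating function of the order-$s$ Cauchy numbers $\mathfrak C_i^{(s)}$, and then reuse the pairing of $\prod_{j=1}^r\bigl(\ln(1+t)/((1+t)^{a_j}-1)\bigr)\bigl(\ln(1+t)\bigr)^m$ (or its second-kind analogue) against $x^{n-i}$ already evaluated in the proof of Theorem \ref{th50}. The only difference is immaterial bookkeeping --- whether $\bigl(t/\ln(1+t)\bigr)^s$ is expanded on the functional side or first moved across the pairing via (\ref{uc11}) --- so no changes are needed.
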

\begin{proof}
For (\ref{10b}) and
\begin{equation}
\mathfrak B_n^{(s)}(x)\sim\left(\left(\frac{e^t-1}{t}\right)^s, t\right)\,,
\label{96}
\end{equation}
assume that $D_n(x|a_1,\dots,a_r)=\sum_{m=0}^n C_{n,m}\mathfrak B_m^{(s)}(x)$.
By (\ref{uc19}), similarly to the proof of (\ref{50a}), we have
\begin{align*}
C_{n,m}&=\frac{1}{m!}\ang{\frac{\left(\frac{e^{\ln(1+t)}-1}{\ln(1+t)}\right)^s}{\prod_{j=1}^r\left(\frac{e^{a_j\ln(1+t)}-1}{\ln(1+t)}\right)}\bigl(\ln(1+t)\bigr)^m\Big|x^n}\\
&=\frac{1}{m!}\ang{\prod_{j=1}^r\left(\frac{\ln(1+t)}{(1+t)^{a_j}-1}\right)  \bigl(\ln(1+t)\bigr)^m\Big|\left(\frac{t}{\ln(1+t)}\right)^s x^n}\\
&=\frac{1}{m!}\ang{\prod_{j=1}^r\left(\frac{\ln(1+t)}{(1+t)^{a_j}-1}\right)  \bigl(\ln(1+t)\bigr)^m\Big|\sum_{i=0}^\infty\mathfrak C_i^{(s)}\frac{t^i}{i!}x^n}\\
&=\frac{1}{m!}\sum_{i=0}^{n-m}\mathfrak C_i^{(s)}\binom{n}{i}\ang{\prod_{j=1}^r\left(\frac{\ln(1+t)}{(1+t)^{a_j}-1}\right)  \bigl(\ln(1+t)\bigr)^m\Big|x^{n-i}}\\
&=\frac{1}{m!}\sum_{i=0}^{n-m}\mathfrak C_i^{(s)}\binom{n}{i}\sum_{l=0}^{n-m-i}m!\binom{n-i}{l}S_1(n-i-l,m)D_l(a_1,\dots,a_r)\\
&=\sum_{i=0}^{n-m}\sum_{l=0}^{n-m-i}\binom{n}{i}\binom{n-i}{l}\mathfrak C_i^{(s)}S_1(n-i-l,m)D_l(a_1,\dots,a_r)\,.
\end{align*}
Thus, we get the identity (\ref{90}).

Next, for (\ref{10b}) and (\ref{96}), assume that $\widehat D_n(x|a_1,\dots,a_r)=\sum_{m=0}^n C_{n,m}\mathfrak B_m^{(s)}(x)$.
By (\ref{uc19}), similarly to the proof of (\ref{50b}), we have
\begin{align*}
C_{n,m}&=\frac{1}{m!}\ang{\frac{\left(\frac{e^{\ln(1+t)}-1}{\ln(1+t)}\right)^s}{\prod_{j=1}^r\left(\frac{e^{a_j\ln(1+t)}-1}{e^{a_j\ln(1+t)}\ln(1+t)}\right)}\bigl(\ln(1+t)\bigr)^m\Big|x^n}\\
&=\frac{1}{m!}\ang{\prod_{j=1}^r\left(\frac{(1+t)^{a_j}\ln(1+t)}{(1+t)^{a_j}-1}\right)  \bigl(\ln(1+t)\bigr)^m\Big|\left(\frac{t}{\ln(1+t)}\right)^s x^n}\\&=\frac{1}{m!}\ang{\prod_{j=1}^r\left(\frac{(1+t)^{a_j}\ln(1+t)}{(1+t)^{a_j}-1}\right)  \bigl(\ln(1+t)\bigr)^m\Big|\sum_{i=0}^\infty\mathfrak C_i^{(s)}\frac{t^i}{i!}x^n}\\
&=\frac{1}{m!}\sum_{i=0}^{n-m}\mathfrak C_i^{(s)}\binom{n}{i}\ang{\prod_{j=1}^r\left(\frac{(1+t)^{a_j}\ln(1+t)}{(1+t)^{a_j}-1}\right)  \bigl(\ln(1+t)\bigr)^m\Big|x^{n-i}}\\
&=\frac{1}{m!}\sum_{i=0}^{n-m}\mathfrak C_i^{(s)}\binom{n}{i}\sum_{l=0}^{n-m-i}m!\binom{n-i}{l}S_1(n-i-l,m)\widehat D_l(a_1,\dots,a_r)\\
&=\sum_{i=0}^{n-m}\sum_{l=0}^{n-m-i}\binom{n}{i}\binom{n-i}{l}\mathfrak C_i^{(s)}S_1(n-i-l,m)\widehat D_l(a_1,\dots,a_r)\,.
\end{align*}
Thus, we get the identity (\ref{91}).
\qed\end{proof}

\end{document}